\newtheorem{thm}{Theorem}[section]
\newtheorem{cor}[thm]{Corollary}
\newtheorem{prop}[thm]{Proposition}
\begin{document}

\title[]
{Interpolation theorems for operators in non-commutative
vector-valued symmetric spaces}

\author{V.I.Chilin}
\address{V.I.Chilin\\
 Department of algebra and functional analisis  \\
Faculty of mathematics and mechanics, Uzbek National University \\
Tashkent, 100174, Uzbekistan} \email{\tt chilin@@ucd.uz;\tt
vladimirchil@@gmail.com}

\author{A.K.Karimov}
\address{A.K.Karimov\\
Department of Mathematics\\
Tashkent Institute of Textile and Light Industry\\
Tashkent, 100100, Uzbekistan} \email{\tt
abdusalom.karim@@gmail.com;\tt karimov57@@rambler.ru}

\begin{abstract}
We prove the version of interpolation theorem for
non-commutative vector-valued fully symmetric spaces associated
with fully symmetric Banach function spaces and a von Neumann
algebra equipped with a faithful semifinite normal trace.

 \vskip 0.3cm \noindent {\it MSC 2010}:
 46L59, 46E40, 46L53, 46B70.\\
{\it Key words}: von Neumann algebra, measurable operator,
non-commutative symmetric spaces, interpolation theorem.
\end{abstract}
\maketitle

\section{Introduction.}
The development of the theory of non-commutative integration, initiated by I.E. Segal \cite{Se}, allowed to construct the theory of symmetric spaces of measurable operators
similar to the theory of symmetric Banach function spaces.
Naturally that a general theory of interpolation of linear operators
for symmetric Banach function spaces (see e.g. \cite{BL},\cite{KPS}) got its 'noncommutative' development.   Non-commutative variant of
Riesz-Thorin theorem firstly was proved by Kunze \cite{K}, while
an application of the real interpolation method in the
non-commutative setting was given in \cite{PS}. A general approach
to construct an interpolation theory in the case of trace ideals
has been explored by Arasy \cite {A}. In the case of arbitrary von
Neumann algebras this approach was developed by
Dodds-Dodds-Paghter \cite{DDP}.

On the other hand the development of non-commutative ergodic
theory leads to study of special classes of non-commutative vector
valued $L_p$-spaces $L_p(M,\tau,I)$ associated with a von-Neumann
algebra $M,$ faithful normal semifinite trace $\tau$ on $M$ and
with an indexed set $I$ ( \cite {P}). In particular, it was
necessary to prove various variants of interpolation theorems for
$L_p(M,\tau,I)$.

In this paper, it is defined a new class of non-commutative fully
symmetric spaces $E(M,\tau,I),$ associated with $(M,\tau,I)$ and
with fully symmetric Banach function space $E([0,\infty))$ of
measurable functions defined on $[0,\infty).$ We prove that the
Banach space $E(M,\tau,I)$ is isometric to complemented subspace
in non-commutative symmetric spaces $E\left( {M \otimes B\left(
{l_2 \left( I \right)} \right)} \right)$, where $B\left( {l_2
\left( I \right)} \right)$  is  the algebra of all bounded linear
operators on the complex Hilbert spaces $l_2 \left( I \right).$
 By means of this isometry  and
non-commutative interpolation
 theorem in (\cite {DDP})  we are able to prove that the next
 version of interpolation theorem for  spaces $E(M,\tau,I).$

\begin{thm}
\label{t_1_1}
Let $E,F,E_1,E_2,F_1,F_2$ be fully symmetric Banach
function spaces on $[0, + \infty),$ let $M_i$ be a von Neumann
algebra, let $\tau_i$ be a faithful normal semifinite trace on
$M_i, i=1,2,$ and let $I$ be an arbitrary indexed set.

(i) If $(E,F)$ is an (exact) interpolation pair for the pair
$((E_1,F_1),(E_2,F_2))$ then $(E(M_1,\tau_1,I),F(M_2,\tau_2,I))$is
an (exact) interpolation pair for
$$((E_1(M_1,\tau_1,I),F_1(M_1,\tau_1,I)),(E_2(M_2,\tau_2,I),F_2(M_2,\tau_2,I))).$$
(ii) If $(E,F)$ is an exact interpolation pair of exponent
$\theta\in[0,1]$ for the pair $((E_1,F_1),(E_2,F_2)),$ then
$(E(M_1,\tau_1,I),F(M_2,\tau_2,I))$ is an exact interpolation
pair of exponent $\theta\in[0,1]$ for
$$((E_1(M_1,\tau_1,I),F_1(M_1,\tau_1,I)),
(E_2(M_2,\tau_2,I),F_2(M_2,\tau_2,I))  ).$$

\end{thm}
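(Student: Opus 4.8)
The plan is to transfer the statement to the category of ordinary non-commutative symmetric spaces, where it follows from the interpolation theorem of \cite{DDP}, by means of the isometric complementation of $G(M,\tau,I)$ inside $G(M\,\overline{\otimes}\,B(l_2(I)))$ recalled in the Introduction. Write $\mathcal M_1:=M_1\,\overline{\otimes}\,B(l_2(I))$ and $\mathcal M_2:=M_2\,\overline{\otimes}\,B(l_2(I))$, each equipped with its natural faithful normal semifinite trace. Everything rests on the fact that this complementation is spatial and uniform in the symmetric space: there is one linear map $j$, defined on the $*$-algebra of measurable operators affiliated with $\mathcal M_1$ and not depending on the function space, whose restriction is an isometry of $G(M_1,\tau_1,I)$ onto a subspace of $G(\mathcal M_1)$ for every fully symmetric Banach function space $G$; and one linear map $p$, again independent of $G$, whose restriction is a norm-one projection of $G(\mathcal M_1)$ onto that subspace with $p\circ j=\mathrm{id}$. (That $p$ acts contractively on every fully symmetric $G(\mathcal M_1)$ is where full symmetry enters: $p$ is a contraction on both $\mathcal M_1$ and $L_1(\mathcal M_1)$, and, $G$ being fully symmetric, $G(\mathcal M_1)$ is an exact interpolation space for the couple $(L_1(\mathcal M_1),\mathcal M_1)$.) Let $j',p'$ denote the analogous maps for $\mathcal M_2$. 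Since $j,p$ (resp.\ $j',p'$) are implemented by single spatial maps they respect all inclusions between symmetric spaces, hence are morphisms of all the Banach couples appearing below; in particular $E(M_1,\tau_1,I)$ is an intermediate space for $(E_1(M_1,\tau_1,I),E_2(M_1,\tau_1,I))$ and $F(M_2,\tau_2,I)$ for $(F_1(M_2,\tau_2,I),F_2(M_2,\tau_2,I))$, so all couples and intermediate spaces in the statement are meaningful.

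Now let $T$ be a linear operator with $T\colon E_i(M_1,\tau_1,I)\to F_i(M_2,\tau_2,I)$ bounded of norm $\le C_i$, $i=1,2$, and set $\widetilde T:=j'\circ T\circ p$. Then $\widetilde T$ carries $E_i(\mathcal M_1)$ into $F_i(\mathcal M_2)$ with $\|\widetilde T\|_{E_i(\mathcal M_1)\to F_i(\mathcal M_2)}\le\|j'\|\,C_i\,\|p\|\le C_i$ for $i=1,2$. As $(E,F)$ is an (exact) interpolation pair for $((E_1,F_1),(E_2,F_2))$, the non-commutative interpolation theorem of \cite{DDP} applied to $\mathcal M_1$ and $\mathcal M_2$ shows that $(E(\mathcal M_1),F(\mathcal M_2))$ is an (exact) interpolation pair for the corresponding couple of non-commutative symmetric spaces over $\mathcal M_1,\mathcal M_2$ — with exponent $\theta$ in the situation of (ii) — so $\widetilde T\colon E(\mathcal M_1)\to F(\mathcal M_2)$ is bounded, with $\|\widetilde T\|\le\max\{C_1,C_2\}$ in case (i) and $\|\widetilde T\|\le C_1^{1-\theta}C_2^{\theta}$ in case (ii). Since $p\circ j=\mathrm{id}$ on $E(M_1,\tau_1,I)$ and $p'\circ j'=\mathrm{id}$ on $F(M_2,\tau_2,I)$, we get $T=p'\circ\widetilde T\circ j$ on $E(M_1,\tau_1,I)$; as $j$ is an isometry and $\|p'\|=1$, this yields $T\colon E(M_1,\tau_1,I)\to F(M_2,\tau_2,I)$ bounded with $\|T\|\le\|\widetilde T\|$, hence $\|T\|\le\max\{C_1,C_2\}$ in (i) and $\|T\|\le C_1^{1-\theta}C_2^{\theta}$ in (ii). The non-exact form of (i) follows in the same way without tracking constants. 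This proves both parts.

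The substantive point is the first step: one must check that the embedding of $G(M,\tau,I)$ into $G(\mathcal M)$ and the complementing projection can be chosen simultaneously for all symmetric spaces, as a single fixed pair of spatial maps, so that they are couple morphisms and one retraction serves all six spaces $E,F,E_1,E_2,F_1,F_2$ at once; this is exactly the content of the isometry recalled in the Introduction, whose construction sees only the measure-theoretic data, not the symmetric norm. Everything afterwards is the routine fact that interpolation passes to retracts, plus the theorem of \cite{DDP} on $\mathcal M_1,\mathcal M_2$. One should also note, for the couples to make sense, that $G\mapsto G(M,\tau,I)$ sends the underlying function couple to an honest Banach couple of spaces of measurable operators; this again follows from the complementation, since $p$ maps $G_1(\mathcal M)\cap G_2(\mathcal M)$ and $G_1(\mathcal M)+G_2(\mathcal M)$ onto $G_1(M,\tau,I)\cap G_2(M,\tau,I)$ and $G_1(M,\tau,I)+G_2(M,\tau,I)$ respectively.
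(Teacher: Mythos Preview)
Your proof is correct and follows essentially the same strategy as the paper: embed each $G(M_i,\tau_i,I)$ isometrically as a complemented subspace of $G(M_i\otimes B(l_2(I)))$ via a single spatial pair (your $j,p$; the paper's $\Phi_i$ and $P$), lift $T$ to $\widetilde T=j'\,T\,p$ (the paper's $\Phi_2 T\Phi_1^{-1}P$), apply the Dodds--Dodds--de~Pagter theorem on the tensor-product algebras, and descend via $T=p'\,\widetilde T\,j$. The only cosmetic difference is that you justify $\|p\|\le 1$ on every fully symmetric $G(\mathcal M_1)$ by invoking exact interpolation between $L_1(\mathcal M_1)$ and $\mathcal M_1$, whereas the paper proves the same bound directly from a singular-number inequality (Proposition~\ref{pr_3_5}(i)); the retraction argument is otherwise identical.
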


We use terminology and notations from the theory of von Neumann algebras
(\cite {SZ}, \cite {Ta}) and the theory of measurable
operators from (\cite {MCh},\cite {Se}).

\section{Preliminaries}

Let  $H$ be a Hilbert space over the field $\mathbb{C}$ of complex
numbers, let $B\left( H \right)$ be the $ * $ - algebra of all
bounded linear operators on $H$, let $\textbf{1}$ be the identity
operator on $H$ and let $M$ be a von Neumann subalgebra of
$B\left( H \right)$. By $P\left( M \right) = \left\{ {p \in M:p^2
= p = p^ * } \right\}$ we denote the lattice of all projections in
$M$ and by $P_{fin} \left( M \right)$ the sublattice of its finite
projections.

A closed linear operator $x$ affiliated with a von Neumann algebra
$M$ with  dense domain $D\left( x \right) \subset H$ is
called $measurable$ if there exists a sequence $\left\{ {p_n }
\right\}_{n = 1}^\infty   \subset P\left( M \right)$ such that
$p_n  \uparrow \textbf{1}, p_n \left( H \right) \subset D\left( x
\right)$ and $p_n ^ \bot   = \textbf{1} - p_n \in P_{fin} \left(M
\right)$ for each $n\in \mathbb{N},$ where $\mathbb{N}$ is the set
of all natural numbers.

The set $S\left( M \right)$ of all measurable operators is a $ * -
$ algebra with identity $\textbf{1}$ over the field $\mathbb{C},$
in addition  $M$ is a $*-$ subalgebra of $S\left( M \right)$
\cite{Se}.

For every subset $E \subset S\left( M \right),$ the set of all
selfadjoint (resp., positive) operators in $E$ is denoted
by $E_h $ (resp,. $E_ +  $ ). The partial order in $S_h (M)$ defined by its cone $S_+(M)$ is denoted by
$\leq$.  For a net $\{x_\alpha\}_{\alpha\in A}\subset S_h (M)$,
the notation $x_\alpha \uparrow x$ (resp., $x_\alpha \downarrow
x$), where $x \in S_h \left( M \right),$ means that $x_\alpha \le
x_\beta  $ (resp., $x_\beta \le x_\alpha $ ) for  $\alpha \le
\beta $ and  $x = $ $\mathop {\sup }\limits_{\alpha \in A}
x_\alpha $ (resp., $x = $ $\mathop {\inf }\limits_{\alpha \in A}
x_\alpha $).

Let $x$ be closed linear operator with dense domain $D\left( x
\right)$ in $H$, and    let   $x = u\left| x \right|$ be the polar
decomposition of the operator $x,$ where $\left| x \right| =
\left( {x^ *  x} \right)^{{1 \over 2}} $ and $u$ is the partial
isometry in $B\left( H \right)$ such that $u^ * u$ is the right
support of $x$. It is known that $x \in S\left( M \right)$ if and
only if $\left| x \right| \in S\left( M \right)$ and $u \in
 M $. If $x$ is self-adjoint operator affiliated with
$M,$ then the spectral family  of projections $\left\{ {E_\lambda
\left( x \right)} \right\}_{\lambda  \in \mathbb{R}} $ for $x$
belongs to $M$, where $\mathbb{R}$ is the set of real numbers.

Let $M$ be a von Neumann algebra with a faithful semifinite normal
trace $\tau $. A densely-defined closed linear operator $x$
affiliated with $M$ is said to be $\tau $ -measurable if for each
$\varepsilon  > 0$ there exists a projection $p \in  P\left( M
\right)$  such that $p\left( H \right) \subseteq D\left( x
\right)$ and $\tau \left( {\textbf{1} - p} \right) \le
\varepsilon$. Let $S \left( {M,\tau } \right)$ be the set of
all $\tau $ -measurable operators. It is clear that $S
\left( {M,\tau } \right)$ is $ * $ - subalgebra in $ S\left( M
\right)$ and $M \subset S\left( {M,\tau } \right)$.

For every $x \in  S \left( {M,\tau } \right)$ we define its
generalized singular numbers by $\mu_t \left( {x} \right) = \inf
\left\{ {s > 0:\tau \left( \textbf{1}-E_s(x) \right) \le t}
\right\}, t>0 $. Let $V\left( {\varepsilon ,\delta } \right) =
\left\{ {x \in S \left( {M,\tau } \right):\mu_\delta \left( {x}
\right) \le \varepsilon } \right\}$. It is known that in $S \left(
{M,\tau } \right)$ there exists Hausdorff vector topology  $t_\tau
$  with base of neighborhoods of zero given by $\left\{ {V\left(
{\varepsilon ,\delta } \right):\varepsilon ,\delta  > 0}
\right\}$. This topology $t_\tau $  is called the topology of convergence
in measure or measure topology. The pair $\left ( S \left( {M,\tau
} \right), t_\tau \right )$ is a complete metrizable topological $
* $ - algebra and $M$ is dense in $\left ( S \left( {M,\tau }
\right), t_\tau \right )$ \cite{N}.

We need the following property of the measure topology $t_\tau$.

\begin{thm} \cite {Ti}. \label{t_2_1} Let $f : \mathbb{R} \to \mathbb{R}$ be a continuous function,
$x_n, x \in S_h \left( {M, \tau } \right)$ and $x_n
\buildrel {t_\tau  } \over \longrightarrow x$. Then
  $f\left( {x_n } \right)\buildrel
{t_\tau  } \over \longrightarrow f\left( x\ \right)$.
\end{thm}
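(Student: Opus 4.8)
The plan is to reduce, in two stages, to the case of resolvent functions, where the topological $*$-algebra structure of $(S(M,\tau),t_\tau)$ and the elementary estimates for the generalized singular numbers $\mu_t$ can be used directly.

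\textbf{Stage 1 (reduction to $C_0(\mathbb{R})$).} First I would observe that the sequence $\{x_n\}$ is equi-$\tau$-measurable: since $x_n\to x$ in $t_\tau$, for each $\delta>0$ there is $N(\delta)$ with $\mu_\delta(x_n-x)\le 1$ for $n\ge N(\delta)$, whence $\mu_{2\delta}(x_n)\le 1+\mu_\delta(x)$ for such $n$; as the remaining (finitely many) $x_n$ and $x$ are individually $\tau$-measurable, it follows that $\sup_n\tau\big(\mathbf 1-E_k(|x_n|)\big)\to 0$ and $\tau\big(\mathbf 1-E_k(|x|)\big)\to 0$ as $k\to\infty$. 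Now fix a continuous $f$ and $\varepsilon,\delta>0$, and choose $g\in C_c(\mathbb{R})$ with $g=f$ on $[-k,k]$. Then $(f-g)(x_n)=(f-g)(x_n)\,\chi_{\{|x_n|>k\}}(x_n)$ has support projection of trace $\le\tau(\mathbf 1-E_k(|x_n|))$, so $\mu_{\delta/3}\big((f-g)(x_n)\big)=0$ for all $n$ once $k$ is large, and likewise $\mu_{\delta/3}\big((f-g)(x)\big)=0$. Writing $f(x_n)-f(x)=(f-g)(x_n)-(f-g)(x)+\big(g(x_n)-g(x)\big)$ and using subadditivity of $t\mapsto\mu_t$, matters reduce to proving that $g(x_n)\to g(x)$ in $t_\tau$ for every $g\in C_0(\mathbb{R})$.

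\textbf{Stage 2 (resolvents and Stone--Weierstrass).} Put $R_n=(x_n-i)^{-1}$ and $R=(x-i)^{-1}$, elements of $M$ with $\|R_n\|,\|R\|\le 1$. Read in the algebra $S(M,\tau)$, the resolvent identity gives $R_n-R=R_n(x-x_n)R$, so by the standard estimate $\mu_t(ayb)\le\|a\|\,\|b\|\,\mu_t(y)$ for $a,b\in M$, $y\in S(M,\tau)$, one gets $\mu_t(R_n-R)\le\mu_t(x-x_n)\to 0$ for each $t>0$, i.e.\ $R_n\to R$ in $t_\tau$; applying the (continuous) involution also $R_n^{*}=(x_n+i)^{-1}\to(x+i)^{-1}$. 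Since $(S(M,\tau),t_\tau)$ is a topological $*$-algebra, for every fixed $*$-polynomial $q$ without constant term one has $q(R_n,R_n^{*})\to q(R,R^{*})$ in $t_\tau$. By the Stone--Weierstrass theorem for $C_0(\mathbb{R})$, the $*$-subalgebra generated by $t\mapsto(t-i)^{-1}$ separates points of $\mathbb{R}$ and vanishes nowhere, hence is dense; thus, given $g\in C_0(\mathbb{R})$ and $\eta>0$, pick such a polynomial $p$ with $\|g-p\|_\infty<\eta$. Then $\|(g-p)(y)\|\le\|g-p\|_\infty<\eta$ for every self-adjoint $y\in S(M,\tau)$ (bounded functional calculus), so $\mu_t\big(g(x_n)-p(x_n)\big)<\eta$ and $\mu_t\big(g(x)-p(x)\big)<\eta$ for all $t>0$, while $p(x_n)=q(R_n,R_n^{*})\to q(R,R^{*})=p(x)$; combining these with subadditivity of $\mu_t$ yields $g(x_n)\to g(x)$ in $t_\tau$, which completes Stage 2 and hence the proof.

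\textbf{Main obstacle.} The genuinely delicate point is Stage 1: the tail estimate $\mu_{\delta/3}\big((f-g)(x_n)\big)\to 0$ must hold \emph{uniformly in $n$}, which is precisely why equi-$\tau$-measurability of $\{x_n\}$ has to be extracted from the hypothesis $x_n\to x$ before the cutoff can be performed; without uniformity one cannot pass from the continuous $f$ to a compactly supported approximant. Stage 2 is the classical device of generating $C_0(\mathbb{R})$ from a single resolvent, transplanted to the measure topology via joint continuity of the algebra operations and the order/norm estimates for $\mu_t$, and presents no serious difficulty.
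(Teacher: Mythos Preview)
The paper does not prove Theorem~\ref{t_2_1}; it is quoted verbatim from Tikhonov \cite{Ti} and used as a black box in the proof of Proposition~\ref{pr_3_3}. There is therefore no ``paper's own proof'' to compare your attempt against.

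That said, your argument is a correct self-contained proof. The two-stage reduction is standard and cleanly executed: in Stage~1 you correctly extract the equi-$\tau$-measurability $\sup_n\tau(\mathbf 1-E_k(|x_n|))\to 0$ from $x_n\to x$ in measure via $\mu_{2\delta}(x_n)\le\mu_\delta(x_n-x)+\mu_\delta(x)$, and then use that the support projection of $(f-g)(x_n)$ lies under $\mathbf 1-E_k(|x_n|)$ to kill the tails uniformly in $n$; in Stage~2 the resolvent identity $R_n-R=R_n(x-x_n)R$ together with $\mu_t(ayb)\le\|a\|\,\|b\|\,\mu_t(y)$ gives $R_n\to R$ in $t_\tau$, and the density of the $*$-algebra generated by $(t-i)^{-1}$ in $C_0(\mathbb R)$ (Stone--Weierstrass) plus joint continuity of multiplication in the topological $*$-algebra $(S(M,\tau),t_\tau)$ finishes the job. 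The uniform bound $\|(g-p)(y)\|\le\|g-p\|_\infty$ from the Borel functional calculus is exactly what lets you interchange the approximation with the limit in $n$. Your identification of the uniform-in-$n$ cutoff as the only genuinely delicate point is accurate.
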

A Banach space $(E,\left\| \cdot \right\|_E  )$ which is a linear
subspace of $S \left( {M,\tau } \right)$  is called fully
symmetric if conditions $x\in E,$ $y\in S \left( {M,\tau }
\right),$  $\int\limits_{{0}}^s  \mu_t\left( {y}
\right)dt\leq\int\limits_{{0}}^s  \mu_t\left( {x} \right)dt$ for
all $s>0$ imply that $y\in E$ and $\left\| y \right\|_E\leq\left\|
x \right\|_E.$ The space $(E,\left\| \cdot \right\|_E  )$ is said
to have the Fatou property if the conditions  $0\leq x_\alpha \in E,
x_\alpha \leq x_\beta $ for $\alpha\leq \beta,$   $\mathop {\sup
}\limits_{\alpha } \|x_\alpha \|_E<\infty$ imply that there exists $x =
$ $\mathop {\sup }\limits_{\alpha } x_\alpha $ in $E$ and $\left\|
x \right\|_E =\mathop {\sup }\limits_{\alpha } \|x_\alpha \|_E.$
The space $(E,\left\| \cdot \right\|_E  )$ is said to have order
continuous norm if $x_\alpha\in E,$ $x_\alpha\downarrow0$ implies
$\left\| x_\alpha \right\|_E\downarrow0.$ It is shown in
\cite{DDP} that if $(E,\left\| \cdot \right\|_E  )$ is a fully
symmetric space, $x_n,x\in E$ and $\left\| x_n
-x\right\|_E\rightarrow0$ then $x_n \buildrel {t_\tau  } \over
\longrightarrow x.$

Let $L^0{([0,\infty))}$ be the linear space of all (equivalance
classes of) almost everywhere finite complex-valued Lebesque
measurable functions on the half line $[0,\infty).$  We identify
$L^\infty {([0, \infty))}$ with the commutative von Neumann algebra
acting by multiplication on the Hilbert space $L^2{([0,\infty))}$ equipped
with  trace given by integration with respect to the Lebesque measure.
A Banach space $E([0,\infty))\subset L^0{([0,\infty))}$ is called
a fully symmetric Banach function space on $[0,\infty)$ if the
corresponding condition above holds with respect to the von Neumann
algebra $L^\infty {([0, \infty))}.$

Let $E=E([0,\infty))$ be a fully symmetric Banach function space
on $[0,\infty).$  We define $E(M,\tau)=E(M)=\left\{ x\in
S(M,\tau):\mu_t{(x)}\in E \right\}$ and set $\left\| x
\right\|_{E(M)}=\left\| \mu_t {(x)} \right\|_E , x\in E(M).$ It is
show in \cite{DDP} that $E(M)$ is a fully symmetric space in
$S(M,\tau).$ If $p\geq1$ and $E([0,\infty))=L_p{([0,\infty))}$,
then
$$L_p{(M,\tau)}=\left\{ x\in S(M,\tau):\left\| x \right\|_p=
(\int\limits_{0}^\infty \mu_t^{p}(\left| x \right|)
dt)^{1/p}=(\int\limits_{0}^\infty \lambda^p d \tau(E_\lambda
(\left| x \right|)))^{1/p}<\infty\right\}$$ is a non-commutative
$L_p-$ space with respect to the order continuous norm $\left\|
\cdot \right\|_p$ \cite{Y}.

Let $H_1 \otimes H_2$ be the tensor product of Hilbert spaces of
$H_1$ and $H_2,$  let $M_1$ and $M_2$ be von Neumann algebras
acting in Hilbert spaces $H_1$ and $H_2$ respectively. Denote by
$M_1 \otimes M_2$ the tensor product of the von Neumann algebras
$M_1$ and $M_2$, i.e. the von Neumann algebra in $B\left( {H_1
\otimes H_2} \right)$ generated by $\ast-$ algebra
 $ \left\{ {\sum\limits_{k = 1}^n {x_k
\otimes y_k; x_k \in M_1}, y_k \in M_2, n \in \ \mathbb{N}}
\right\}.$

 We need the following properties of tensor product of von Neumann
 algebras.

\begin{thm}\cite{Ta}. \label{t_2_2}
 Let $\tau _i $    be a faithful normal
semifinite trace on von Neumann algebra $M_i$, $i = 1,2$. Then
there exists unique faithful normal semifinite trace $\rho $ on
von Neumann algebra $M_1  \otimes M_2$ such that
$$\rho \left( {x \otimes y} \right) = \tau _1 \left( x
\right)\tau _2 \left( y \right)  \text{for every} \ x \in
(M_1)_{+} , \ y \in (M_2)_{+} .$$
\end{thm}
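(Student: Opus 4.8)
The plan is to settle existence first for \emph{finite} traces, then bootstrap to the semifinite case, and finally deduce uniqueness from a density argument. Throughout write $N=M_1\otimes M_2$ and adopt the convention $0\cdot\infty=0$. By semifiniteness of the $\tau_i$ I would fix increasing nets of projections $e_\alpha\uparrow\textbf{1}$ in $M_1$ and $f_\beta\uparrow\textbf{1}$ in $M_2$ with $\tau_1(e_\alpha)<\infty$ and $\tau_2(f_\beta)<\infty$; the product projections $g=e_\alpha\otimes f_\beta$ then form a directed net in $N$ with $g\uparrow\textbf{1}$, and the corner $gNg$ is canonically identified with $(e_\alpha M_1 e_\alpha)\otimes(f_\beta M_2 f_\beta)$.

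For the finite case I would use the trace vector. If $\tau_1,\tau_2$ are finite, faithful and normal, I form the GNS spaces $H_i=L_2(M_i,\tau_i)$ with their cyclic and separating trace vectors $\Omega_i$, realise $N$ on $H_1\otimes H_2$, and set $\rho(z)=\langle z(\Omega_1\otimes\Omega_2),\Omega_1\otimes\Omega_2\rangle$. This is a normal state; on elementary tensors it equals $\tau_1(x)\tau_2(y)$; and it is tracial because $\Omega_1\otimes\Omega_2$ is a trace vector, the identity $\langle zw\,\Omega,\Omega\rangle=\langle wz\,\Omega,\Omega\rangle$ being verified first on the $\sigma$-weakly dense algebraic tensor product (where it reduces to the tracial property of $\tau_1$ and $\tau_2$) and then extended by normality. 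Applying this to each corner yields finite normal traces $\rho_g$ on $gNg$ with the product property, and these are mutually compatible: on $gNg\subseteq hNh$ the restriction of $\rho_h$ and the trace $\rho_g$ agree, both being normal traces that coincide on the dense algebraic tensor product of the smaller corner.

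To reach the semifinite case I would define, for $z\in N_+$, $\rho(z)=\sup_g\rho_g(gzg)$ over the directed net of product projections. The product property then extends to arbitrary $x\in(M_1)_+$, $y\in(M_2)_+$: by traciality of $\tau_1$ one has $\tau_1(e_\alpha x e_\alpha)=\tau_1(x^{1/2}e_\alpha x^{1/2})\uparrow\tau_1(x)$ and similarly for $\tau_2$, so $\rho(x\otimes y)=\tau_1(x)\tau_2(y)$; this also gives semifiniteness, since finite-trace positive elements produce finite-trace elementary tensors spanning a $\sigma$-weakly dense subalgebra, and it gives faithfulness, via faithfulness of each $\rho_g$ together with $g\uparrow\textbf{1}$. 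Normality is an interchange of suprema. The delicate point — and the step I expect to be the main obstacle — is \emph{traciality} of the global $\rho$, because $gzg$ is controlled by $\rho_g$ only inside the corner, and the cyclicity that would make $g\mapsto\rho_g(gzg)$ visibly monotone is precisely what is being constructed. I would resolve this by viewing the algebraic tensor product $\mathfrak n_1\odot\mathfrak n_2$, with $\mathfrak n_i=\{x\in M_i:\tau_i(x^*x)<\infty\}$, involution $x\otimes y\mapsto x^*\otimes y^*$, and inner product induced by $\tau_1$ and $\tau_2$, as a Hilbert algebra inside $H_1\otimes H_2$: the involution is isometric exactly because $\tau_i(x^*x)=\tau_i(xx^*)$, so the left von Neumann algebra it generates carries a canonical faithful normal semifinite trace, which one checks agrees with $\rho$ and satisfies the product property on the dense ideal. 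Identifying this generated algebra with all of $N$, via the commutation theorem for tensor products, is the second point requiring care.

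Uniqueness I would obtain as follows. If $\rho'$ is any faithful normal semifinite trace on $N$ with the product property, then for each product projection $g=e\otimes f$ with $\tau_1(e),\tau_2(f)<\infty$ both $\rho$ and $\rho'$ restrict to finite normal traces on the corner $gNg$ that coincide on the $\sigma$-weakly dense algebraic tensor product; since a normal functional is determined by its values on a $\sigma$-weakly dense subspace, they agree on every such corner. For a genuine trace, cyclicity gives $\rho(gzg)=\rho(z^{1/2}gz^{1/2})\uparrow\rho(z)$ as $g\uparrow\textbf{1}$, so $\rho(z)=\sup_g\rho(gzg)$, and the identical formula holds for $\rho'$; hence $\rho=\rho'$ on all of $N_+$, which completes the proof.
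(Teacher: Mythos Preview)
The paper does not supply a proof of this statement: it is quoted from Takesaki \cite{Ta} as a known preliminary result, so there is no argument in the paper to compare yours against.

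For what it is worth, your Hilbert-algebra paragraph is essentially the standard textbook construction (and is close to what one finds in the cited reference): realise $M_i$ on $L_2(M_i,\tau_i)$, observe that $\mathfrak n_1\odot\mathfrak n_2$ is a full Hilbert algebra in $H_1\otimes H_2$, and take the canonical trace on its left von Neumann algebra, which is $M_1\otimes M_2$. Your uniqueness argument via corners is also correct. One organisational remark: your preliminary definition $\rho(z)=\sup_g\rho_g(gzg)$ is, as you yourself flag, not obviously tracial, and the Hilbert-algebra step does not so much repair it as supersede it --- once the Hilbert-algebra trace is in hand, the corner formula becomes a \emph{consequence} of normality and traciality rather than a definition, so it would be cleaner to lead with the Hilbert algebra directly. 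Also, identifying the left von Neumann algebra with $N$ does not strictly require the commutation theorem: left multiplication by $a\otimes b$ is $L_a\otimes L_b$, the operators $L_a$ for $a\in\mathfrak n_1$ generate $M_1$ by $\sigma$-weak density of $\mathfrak n_1$, and the definition of the spatial tensor product then gives $N$ directly.
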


The trace $\rho $ is called tensor product of traces $\tau _1 $ and
$\tau _2 $ and is denoted by $\rho  = \tau _1  \otimes \tau _2 $. If
$x$ and $y$ are densely defined closed linear operators on $H_1 $
and $H_2 $ respectively, we define their algebraic tensor product,
denoted by $x \otimes y$, as the closure of the smallest linear
extension of the map $\xi \otimes \zeta \to x\xi  \otimes y\zeta$
where $\xi \in D\left( {x} \right)$ \ $\zeta  \in D\left( {y }
\right)$. This closure is correctly defined and  $\left( {x \otimes y}
\right)^* = x^*   \otimes y^*$ \cite{SZ}.

It is known \cite{St}  that if  $ x_i \in  L_1 \left(
{M_i,\tau_i }\right) $,  $ i=1,2, $ then   $x_1 \otimes
x_2 \in L_1 \left( {M_1 \otimes M_2},\tau_1\otimes\tau_2 \right),$
in addition $\left\| {x \otimes y} \right\|_1 = \left\| x
\right\|_1 \left\| y \right\|_1 .$ Since   $E([0,\infty))\subset
L_1{([0,\infty))}+L_\infty{([0,\infty))}$ for every fully
symmetric Banach function space on $[0,\infty)$ [\cite{KPS},
ch.II, $\S$4], it follows that $x_1 \otimes x_2 \in L_1 \left( {M_1 \otimes
M_2},\tau_1\otimes\tau_2 \right)+{M_1 \otimes M_2}$ for all
$x_i\in E(M_i),i=1,2.$

 Let $I$ be an arbitrary indexed set, and let $l_2 \left( I
\right)$ be the Hilbert space of all families $\left( {\alpha _i }
\right)_{i \in {\rm I}} $ of complex numbers with
$\sum\limits_{i \in {\rm I}} {\left| {\alpha _i } \right|^2 } <
\infty$. The scalar product in $l_2 \left( I \right)$ is defined by $\left( {\left( {\alpha _i } \right),\left( {\beta _i }
\right)} \right) = \sum\limits_{i \in I} {\alpha _i \overline
\beta  _i }$. For $j \in I$ we set $e_j  = \left( {\alpha_i^{j} }
\right)_{i \in I} \in l_2 \left( I \right)$, where $\alpha_i^{j}  = 0$
if $i \ne j$ and $\alpha_i^{i} = 1$.  For any  $\xi  =
\left( {\alpha_i } \right)_{i\in I} \in l_2 \left( I \right)$ we
have $\xi = \sum\limits_{i \in I} {\alpha_i e_i }$.  By $u_{ij} $
we denote the matrix units in $B\left( {l_2 \left( I \right)}
\right),$ i.e. $u_{ij} \left( { \xi} \right) = \left( {\xi, e_j }
\right)e_i $ for all $\xi \in B(l_2 \left( I
\right))$, $i,j \in I$.

We need the following well-known property of matrix units.

\begin{prop}
\label{pr_2_3}
If $x\in B(l_2(I))$,  then $p_ixp_j= (xe_j,e_i)u_{ij}$ for all $i,j\in I$, where $p_i = u_{ii}$.

\end{prop}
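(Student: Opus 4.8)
The plan is to prove the identity by a direct computation on an arbitrary vector of $l_2(I)$, relying only on the definition of the matrix units and the observation that $p_i=u_{ii}$ acts as the orthogonal projection onto the one-dimensional subspace $\mathbb{C}e_i$. Concretely, specializing the defining formula $u_{ij}(\eta)=(\eta,e_j)e_i$ to $i=j$ gives $p_i\eta=u_{ii}(\eta)=(\eta,e_i)e_i$ for every $\eta\in l_2(I)$.

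Fix $i,j\in I$ and take an arbitrary $\xi\in l_2(I)$. First apply $p_j$: by the above, $p_j\xi=(\xi,e_j)e_j$. Then apply $x$ and use linearity to pull out the scalar: $xp_j\xi=(\xi,e_j)\,xe_j$. Finally apply $p_i$, again using the formula for $p_i$ together with linearity: $p_ixp_j\xi=(\xi,e_j)\,p_i(xe_j)=(\xi,e_j)(xe_j,e_i)\,e_i$. On the other hand, directly from the definition of $u_{ij}$ one has $\bigl((xe_j,e_i)u_{ij}\bigr)(\xi)=(xe_j,e_i)\,u_{ij}(\xi)=(xe_j,e_i)(\xi,e_j)\,e_i$. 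The two right-hand sides coincide for every $\xi\in l_2(I)$, hence the bounded operators $p_ixp_j$ and $(xe_j,e_i)u_{ij}$ are equal, which is the assertion.

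There is essentially no obstacle here: the argument uses only linearity of $x$ (to move it past the scalar $(\xi,e_j)$) and the elementary fact that two elements of $B(l_2(I))$ that agree on every $\xi\in l_2(I)$ are equal. The one point worth stating explicitly, rather than a genuine difficulty, is that $p_i=u_{ii}$ acts as claimed on $l_2(I)$; this is immediate from the defining formula for the matrix units specialized to the diagonal, and it is exactly the identification of $u_{ii}$ with the rank-one projection onto $\mathbb{C}e_i$.
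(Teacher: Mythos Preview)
Your proof is correct. The paper states this proposition as a ``well-known property of matrix units'' and does not supply a proof; your direct verification on an arbitrary vector, using only the defining formula $u_{ij}(\xi)=(\xi,e_j)e_i$ specialized to the diagonal to identify $p_i$ with the rank-one projection onto $\mathbb{C}e_i$, is exactly the standard argument and fills the gap cleanly.
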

Let $M$ be an arbitrary von Neumann algebra and let $\tau$ be a faithful
normal semifinite trace on $M.$ We denote by $tr$ the canonical
trace on $B(l_2(I))$ and consider the tensor product $M \otimes
B(l_2(I) ).$ Let $E([0,\infty))$ be a fully symmetric Banach
function space on $[0,\infty).$

\begin{prop}
\label{pr_2_4}
If $x\in E(M),$ $i,j\in I,$ then $x\otimes u_{ij}\in E(M \otimes B(l_2(I))),$
in addition  $\left\| x \right\|_{E(M)}=\left\| x\otimes u_{ij}
\right\|_{E(M \otimes B(l_2(I)))}.$
\end{prop}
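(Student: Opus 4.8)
The plan is to reduce everything to the behaviour of generalized singular numbers, since both norms in the statement are defined through $\mu_t$. First I would establish the key claim that $x \otimes u_{ij}$ is $(\tau \otimes tr)$-measurable whenever $x \in S(M,\tau)$, and that
$$\mu_t(x \otimes u_{ij}) = \mu_t(x) \qquad \text{for all } t>0.$$
The cleanest route is to compute $|x \otimes u_{ij}|$. Writing $x = w|x|$ for the polar decomposition, one has $x \otimes u_{ij} = (w \otimes u_{ij})(|x| \otimes u_{jj})$ since $u_{ij}^* u_{ij} = u_{jj} = p_j$, so $|x \otimes u_{ij}| = |x| \otimes p_j$ (using $(x \otimes y)^* = x^* \otimes y^*$ and the multiplicativity of the tensor product on the relevant domains, as recalled in the Preliminaries). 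The spectral family of $|x| \otimes p_j$ is then $E_\lambda(|x|) \otimes p_j$ for $\lambda \ge 0$ together with the extra jump at $0$ coming from the complementary projection $\mathbf 1 \otimes p_j^\perp$; explicitly, for $\lambda > 0$,
$$\mathbf 1 \otimes \mathbf 1 - E_\lambda(|x| \otimes p_j) = \big(\mathbf 1 - E_\lambda(|x|)\big) \otimes p_j.$$
Applying Theorem \ref{t_2_2} (multiplicativity of the tensor trace) gives $(\tau \otimes tr)\big((\mathbf 1 - E_\lambda(|x|)) \otimes p_j\big) = \tau(\mathbf 1 - E_\lambda(|x|)) \cdot tr(p_j) = \tau(\mathbf 1 - E_\lambda(|x|))$, since $tr(p_j) = tr(u_{jj}) = 1$. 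Plugging this into the definition $\mu_t(y) = \inf\{ s>0 : (\tau\otimes tr)(\mathbf 1 - E_s(y)) \le t\}$ yields $\mu_t(x \otimes u_{ij}) = \mu_t(x)$ directly, and in particular $x$ is $\tau$-measurable iff $x \otimes u_{ij}$ is $(\tau\otimes tr)$-measurable.

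Once this identity is in hand, the Proposition is immediate: if $x \in E(M)$, i.e. $\mu_t(x) \in E([0,\infty))$, then $\mu_t(x \otimes u_{ij}) = \mu_t(x) \in E([0,\infty))$, so $x \otimes u_{ij} \in E(M \otimes B(l_2(I)))$ by definition of the space $E(M \otimes B(l_2(I)))$, and
$$\|x \otimes u_{ij}\|_{E(M \otimes B(l_2(I)))} = \|\mu_t(x \otimes u_{ij})\|_E = \|\mu_t(x)\|_E = \|x\|_{E(M)}.$$

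The main obstacle is the measure-theoretic bookkeeping in the first step: one must be careful that the algebraic tensor product $|x| \otimes p_j$, defined as a closure of an operator on the algebraic tensor product of domains, really does coincide with $|x \otimes u_{ij}|$ as closed operators, and that its spectral projections are the tensor products one expects. This is where the properties of $\otimes$ recalled from \cite{SZ}, \cite{Ta} — that the closure is well defined and compatible with adjoints, and that $x_i \in L_1(M_i,\tau_i)$ gives $x_1 \otimes x_2 \in L_1$ with multiplicative norm — together with Theorem \ref{t_2_2}, do the work; it may be cleanest to first verify the spectral claim for bounded $x$ and then pass to general $\tau$-measurable $x$ by truncation, using that the spectral projections of $|x|$ and of its truncations agree on the relevant range. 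Everything after the singular-number identity is purely formal.
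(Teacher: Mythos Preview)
Your proposal is correct and follows essentially the same route as the paper: compute $|x\otimes u_{ij}| = |x|\otimes p_j$, then show $\mu_t(|x|\otimes p_j)=\mu_t(x)$ using $tr(p_j)=1$, and read off the conclusion from the definition of $E(\cdot)$ and its norm. The only cosmetic difference is that you make the spectral projection identity $\mathbf 1\otimes\mathbf 1 - E_\lambda(|x|\otimes p_j)=(\mathbf 1 - E_\lambda(|x|))\otimes p_j$ explicit, whereas the paper invokes the functional calculus from \cite{MCh} together with the commutation $(|x|\otimes\mathbf 1)(\mathbf 1\otimes p_j)=(\mathbf 1\otimes p_j)(|x|\otimes\mathbf 1)$ to reach the same singular-number equality.
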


\begin{proof} Since $\left( {x \otimes u_{ij}} \right)^*\left( {x \otimes u_{ij}} \right)
= x^*x   \otimes u_{jj},$ it follows that $$|x \otimes u_{ij}|=\left( ({x
\otimes u_{ij}} ) ^* \left( {x \otimes u_{ij}}\right)
\right)^{1/2}=\left(|x|^2\otimes p_j\right)^{1/2}=|x| \otimes
p_j.$$ Thus, $x \otimes u_{ij} \in L_1(M \otimes B(l_2(I)))+M
\otimes B(l_2(I)),$ in addition,  $$\mu_t(x \otimes
u_{ij})=\mu_t(|x| \otimes p_j)=\inf \{s>0:(\tau \otimes
tr)(\textbf{1}_M\otimes \textbf{1}_{B(l_2(I))}-E_s(|x|\otimes
p_j))\leq t \}.$$  Using the functional calculus for positive
measurable operators [\cite{MCh}, ch.2,$\S\S$ 2-3] and the
equalities $|u_j|=p_j,$ $tr(p_j)=1,$ $(|x|\otimes
\textbf{1}_{B(l_2(I))})(\textbf{1}_M \otimes p_j)=|x|\otimes p_j =
(\textbf{1}_M\otimes p_j)(|x|\otimes \textbf{1}_{B(l_2(I))})$   we
have that $\mu_t(|x| \otimes
p_j)=\mu_t(|x|)tr(p_j)=\mu_t(x)\in E([0,\infty))$. Hence $x
\otimes u_{ij} \in E(M\otimes B(l_2(I)))$
 and $\|x\otimes u_{ij}\|_{E(M\otimes B(l_2(I)))}=\|\mu_t(x \otimes
u_{ij})\|_{E([0,\infty))}=\|\mu_t(x)\|_{E([0,\infty))}=\|x\|_{E(M)}.$
\end{proof}

 Following \cite{KPS}, a Banach couple (X,Y) is a pair of Banach spaces
$(X,\|\cdot\|_X),(Y,\|\cdot\|_Y)$ which are algebraically and
topologically embedded in a Hausdorff topological vector space.
With any Banach couple (X,Y) the following Banach
spaces are associated:

(i) the space $ X\cap Y $ equipped with the norm $\|x\|_{X\cap
Y}=\max\{\|x\|_X,\|x\|_Y\},$ $x\in X\cap Y;$

(ii) the space $X+Y$  equipped with the norm    $\|x\|_{X+
Y}=\inf\{\|y\|_X+\|z\|_Y $ $:x=y+z,  y\in X, z \in Y \},$
 $x\in X+Y.$

Let $(X_1,Y_1)$ and  $(X_2,Y_2)$ be Banach couples. A linear map
$T$ from the space $X_1+Y_1$ to the space $X_2+Y_2$ is called a
bounded operator from the couple $(X_1,Y_1)$ to the couple
$(X_2,Y_2)$ if $T$ is bounded operator from $X_1$ into
$X_2$  and $Y_1$  into  $Y_2,$ respectively.

Denote by $B((X_1,Y_1),(X_2,Y_2))$ the linear space of all bounded
linear operators from the couple $(X_1,Y_1)$ to the couple
$(X_2,Y_2).$ This space is a Banach space equipped with the norm
$$\|T\|_{B((X_1,Y_1),(X_2,Y_2))}=\max(\|T\|_{X_1\rightarrow
Y_1},\|T\|_{X_2\rightarrow Y_2}).$$
A Banach space $(Z,\|\cdot\|_Z)$ is said to be intermediate for
the Banach couple $(X,Y)$ if $X \cap Y \subset Z \subset X+Y$ with
continuous inclusions. If $(X_1,Y_1)$ and  $(X_2,Y_2)$ are two
Banach couples, and $Z_1,Z_2$ are  intermediate spaces for the
couple $(X_1,Y_1),$ $(X_2,Y_2)$ respectively, then the pair
$(Z_1,Z_2)$ is called interpolation pair for the pair $((X_1,Y_1),
(X_2,Y_2))$ if every bounded operator from the couple
$(X_1,Y_1)$ to the couple  $(X_2, Y_2)$  acts boundedly from $Z_1$
to $Z_2.$

If $(Z_1,Z_2)$ is an interpolation pair for the pair
$((X_1,Y_1),(X_2,Y_2)),$ then there exists a constant $c>0$ such
that $\|T\|_{Z_1\rightarrow Z_2}\leq
c\|T\|_{B((X_1,Y_1),(X_2,Y_2))}$ for all $T\in
B((X_1,Y_1),(X_2,Y_2)).$

An interpolation pair $(Z_1,Z_2)$ for the pair
$((X_1,Y_1),(X_2,Y_2))$ of Banach couples is called an exact
interpolation pair (resp. exact interpolation pair of exponent
$\theta\in [0,1]$) for the pair $((X_1,Y_1),(X_2,Y_2))$) if
$\|T\|_{Z_1\rightarrow Z_2}\leq \|T\|_{B((X_1,Y_1),(X_2,Y_2))}$
(resp. $\|T\|_{Z_1\rightarrow Z_2}\leq \|T\|_{X_1\rightarrow
Y_1}^{1-\theta}\|T\|_{X_2\rightarrow Y_2}^\theta $) for all $T\in
B((X_1,Y_1),(X_2,Y_2)).$

We need  the following non-commutative interpolation theorem for
spaces $E\left( {M,\tau } \right)$.

\begin{thm}(\cite{DDP}).
\label{t_2_5}
Let $E,F,E_1,E_2,F_1,F_2$ be fully symmetric
Banach function spaces on $[0,+\infty),$ let $M_i$ be a von Numann
algebra and let $\tau_i$ be a faithful semifinite normal trace on
$M_i,i=1,2.$

(i) If $(E,F)$ is an (exact) interpolation pair for the pair
$((E_1,F_1),(E_2,F_2))$ then $(E(M_1),F(M_2))$ is an (exact)
interpolation pair for the pair
$((E_1(M_1),F_1(M_1))$, $(E_2(M_2),F_2(M_2)));$

(ii) If $(E,F)$ is an exact interpolation pair of exponent
$\theta\in [0,1]$ for the pair $((E_1,F_1),(E_2,F_2))$
then $(E(M_1),F(M_2))$ is an exact interpolation pair of exponent
$\theta\in [0,1]$  for the pair
$((E_1(M_1),F_1(M_1)),(E_2(M_2),F_2(M_2)))$.
\end{thm}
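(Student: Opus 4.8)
The plan is to transfer the interpolation problem from the non-commutative spaces $E_i(M_i)$ to the commutative function spaces $E_i$ through the singular value function $x\mapsto\mu(x)$, solve it at the level of fully symmetric function spaces (where the Calder\'on--Mityagin theory is classical), and transfer the conclusion back. It suffices to treat the exact interpolation case in part (i); the bounded case follows by tracking the constant, and part (ii) follows from the same argument using the exponent-$\theta$ norm bound. So fix a linear operator $T\in B((E_1(M_1),F_1(M_1)),(E_2(M_2),F_2(M_2)))$ with $\|T\|_{B}\le 1$, and for $x\in E(M_1)$ (which lies in $E_1(M_1)+F_1(M_1)$ since $E$ is intermediate for $(E_1,F_1)$) put $y=Tx$. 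Writing $K(t,z;X,Y)=\inf\{\|a\|_X+t\|b\|_Y:z=a+b\}$ for the $K$-functional of a couple $(X,Y)$, the boundedness of $T$ on the couple gives the standard domination
$$K(t,y;E_2(M_2),F_2(M_2))\le \|T\|_B\,K(t,x;E_1(M_1),F_1(M_1))\le K(t,x;E_1(M_1),F_1(M_1))$$
for all $t>0$.

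The decisive ingredient is the identity
$$K(t,z;E_i(M_i),F_i(M_i))=K(t,\mu(z);E_i,F_i),\qquad z\in E_i(M_i)+F_i(M_i),\ i=1,2,$$
which says that the $K$-functional of the non-commutative couple depends only on the singular value function and coincides with the commutative one. I would prove this using the formula $\int_0^s\mu_u(z)\,du=K(s,z;L_1(M_i),M_i)$ together with the fact that every fully symmetric function space is an exact interpolation space for the couple $(L_1,L_\infty)$: the point is that the $K$-functional of $(E_i(M_i),F_i(M_i))$ can be expressed purely in terms of Hardy--Littlewood--P\'olya submajorization of singular value functions, and submajorization is invariant under the passage from $z$ to $\mu(z)$. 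Establishing this equality -- in particular matching an optimal decomposition $z=a+b$ in the operator couple with a decomposition $\mu(z)=f+g$ at the function level via the fully symmetric property -- is where the real work lies and is the main obstacle.

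Granting the bridge identity, the displayed domination becomes
$$K(t,\mu(y);E_2,F_2)\le K(t,\mu(x);E_1,F_1)\qquad(t>0).$$
Since fully symmetric function spaces on $[0,\infty)$ form relative Calder\'on couples, this $K$-domination produces a bounded operator $S\in B((E_1,F_1),(E_2,F_2))$ with $\max(\|S\|_{E_1\to E_2},\|S\|_{F_1\to F_2})\le 1$ and $S\mu(x)=\mu(y)$. Now $\mu(x)\in E$ with $\|\mu(x)\|_E=\|x\|_{E(M_1)}$, and because $(E,F)$ is an exact interpolation pair for $((E_1,F_1),(E_2,F_2))$ we have $\|S\|_{E\to F}\le\|S\|_B\le 1$; hence $\mu(y)=S\mu(x)\in F$ with $\|\mu(y)\|_F\le\|\mu(x)\|_E$. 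Consequently $y=Tx\in F(M_2)$ and $\|Tx\|_{F(M_2)}=\|\mu(y)\|_F\le\|\mu(x)\|_E=\|x\|_{E(M_1)}$, which is the asserted exact interpolation estimate.

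For part (ii) the same chain applies verbatim, except that in the last step one invokes the exponent-$\theta$ bound $\|S\|_{E\to F}\le\|S\|_{E_1\to E_2}^{1-\theta}\|S\|_{F_1\to F_2}^{\theta}\le 1$, using that the operator $S$ furnished by the Calder\'on--Mityagin construction has each of $\|S\|_{E_1\to E_2}$ and $\|S\|_{F_1\to F_2}$ bounded by $1$. The bounded (non-exact) case of (i) is obtained identically by carrying the constant $\|T\|_B$ through the domination and noting that an interpolation pair satisfies $\|S\|_{E\to F}\le c\,\|S\|_B$ for some fixed $c>0$.
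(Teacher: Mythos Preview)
The paper does not prove this theorem; it is quoted from \cite{DDP} as an external result and then used as a black box in the proof of Theorem~\ref{t_1_1} via the complemented isometric embedding $\Phi:E(M,\tau,I)\hookrightarrow E(M\otimes B(l_2(I)))$. So there is no argument in the present paper to compare your sketch against.

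On the substance of your attempt: the overall plan---transfer the problem to the function-space level through the identity $K(t,z;E_i(M_i),F_i(M_i))=K(t,\mu(z);E_i,F_i)$---is indeed the strategy of \cite{DDP}, and you rightly single out that bridge identity as the main technical ingredient. The genuine gap lies in the \emph{next} step, where from the $K$-domination
\[
K(t,\mu(y);E_2,F_2)\le K(t,\mu(x);E_1,F_1)
\]
you manufacture a contraction $S\in B((E_1,F_1),(E_2,F_2))$ with $S\mu(x)=\mu(y)$. That inference amounts to asserting that arbitrary couples of fully symmetric function spaces are relative Calder\'on couples, which is not true in general: Calder\'on--Mityagin gives it for $(L_1,L_\infty)$ and Sparr for $(L_{p_0},L_{p_1})$, but for generic fully symmetric $E_i,F_i$ the implication ``$K$-domination $\Rightarrow$ orbit containment'' can fail. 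Put differently, the hypothesis ``$(E,F)$ is an (exact) interpolation pair'' is a statement about \emph{operators}, and without a Calder\'on-type property of the ambient couples you cannot trade it for a statement about $K$-functionals. The same issue bites in your treatment of part~(ii): even if such an $S$ existed, you would need it to inherit the two individual norms $\|T\|_{E_1(M_1)\to E_2(M_2)}$ and $\|T\|_{F_1(M_1)\to F_2(M_2)}$ separately, not just their maximum, to recover the exponent-$\theta$ bound. The argument in \cite{DDP} avoids this obstacle by producing, for each fixed $x$, an operator on the function couple built directly from $T$ (so that the interpolation hypothesis applies to it verbatim and both endpoint norms are controlled by those of $T$), rather than by appealing to $K$-divisibility of $(E_1,F_1)$ and $(E_2,F_2)$.
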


\section{Non-commutative vector valued fully symmetric spaces}

In this section we introduce a class of non-commutative
vector-valued fully symmetric spaces $E(M,\tau,I)$ associated with
von Neumann algebra $M$ equipped with a faithful normal semifinite trace
$\tau$ and with an arbitrary indexed set $I.$ Moreover, we construct
 an isometric linear map from  $E(M,\tau,I)$ into
 $E\left( {M \otimes B\left( {l_2 \left( I \right)} \right)} \right).$

Let $I$ be an arbitrary indexed set and let $\Gamma$ be a directed
set of all finite subsets of $I$ ordered by inclusion. Let
$E=E((0,\infty])$ be a fully symmetric Banach function space on
$[0,\infty).$  Let $M$ be a von Neumann algebra with a faithful
semifinite normal trace $\tau.$ Define a linear space $E_0 \left(
{M, \tau, I} \right)$ as follows:
$$E_0\left( {M, \tau, I} \right)= \{ \left\{ {a_i } \right\}_{i \in \gamma }: a_i\in E \left(
{M,\tau } \right),  i\in \gamma  \in \Gamma \};$$
$$ \lambda \left\{ {a_i } \right\}_{i\in \gamma}=\left\{
{\lambda a_i } \right\}_{i\in \gamma},\lambda \in \mathbb{C};$$
$$\left\{ {a_i} \right\}_{i\in\alpha} +\left\{ { b_i }
\right\}_{i\in\beta}= \left\{ {a_i + b_i }
\right\}_{i\in\alpha\cup\beta},$$
where $a_i=0$ for $i\in
\beta\backslash\alpha$ and $b_i=0$ for
$i\in\alpha\backslash\beta.$

Denote by $\sigma_s$ the bounded linear operator in $E[0,\infty)$
for which $(\sigma_{s}f)(t)=f(s^{-1}t),$ $s>0,$ $t>0,$ $f\in
E([0,\infty))$ [KPS, Ch.II, $\S$4]. According to \cite{FK} for any
$x,y\in E(M,\tau),t>0$ we have $\mu_{t}(x+y)\leq
\mu_{t/2}(x)+\mu_{t/2}(y),$ i.e. $\mu_{t}(x+y)\leq
\sigma_{2}(\mu_{t}(x))+ \sigma_{2}(\mu_{t}(y)).$ Then for any
finite set $\{a\}_{i=1}^{n}\subset E(M,\tau),$ $n\in N,$ the
inequalities $$\mu_{t}(\sum\limits_{i =
1}^n(|a_i|^2))\leq\sum\limits_{i =
1}^{n}\sigma_{2^i}(\mu_{t}(|a_i|^2))= \sum \limits_{i =
1}^{n}(\sigma_{2^i}(\mu_{t}(|a_i|)))^2\leq (\sum \limits_{i =
1}^{n}\sigma_{2^i}(\mu_{t}(|a_i|)))^2$$
hold. Therefore,
$\mu_{t}(\sum\limits_{i = 1}^n(|a_i|^2)^{1/2})=
(\mu_{t}(\sum\limits_{i = 1}^n|a_i|^2))^{1/2}\leq \sum \limits_{i
= 1}^{n}\sigma_{2^i}(\mu_{t}(|a_i|)).$ Since
$\sigma_{2^i}(\mu_{t}(|a_i|))\in E([0,\infty))$ for any
$i=1,2,...n$, we have  $(\sum\limits_{i = 1}^n|a_i|^2)^{1/2}\in
E(M,\tau).$ Thus, for any $a = \left\{ {a_i } \right\}_{i \in
\gamma} \in E_0 \left( {M, \tau, I} \right)$ it is defined the
number
$$\left\| a \right\|_0  = \left\|{\left( {\sum\limits_{i \in
\gamma }^{} {\left| {a_i }\right|^2 } } \right)^{1/2} }
\right\|_{E(M,\tau)}.$$
\begin{prop}
\label{pr_3_1}
The function $\left\| \cdot \right\|_0: E_0\left( {M, \tau, I} \right)\rightarrow
\mathbb{R}$ is a norm on the linear space $E_0 \left( {M, \tau, I}
\right).$
\end{prop}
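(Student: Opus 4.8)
The plan is to identify $\|\cdot\|_0$ with the restriction of the already available norm of the fully symmetric space $E\left(M\otimes B(l_2(I))\right)$ along a linear injection of $E_0(M,\tau,I)$ into $E\left(M\otimes B(l_2(I))\right)$. Once this identification is made, every norm axiom for $\|\cdot\|_0$ is inherited from $\|\cdot\|_{E(M\otimes B(l_2(I)))}$; non-negativity and the triangle inequality in particular become automatic.

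Homogeneity and positive definiteness I would check directly, without leaving $S(M,\tau)$. For $a=\{a_i\}_{i\in\gamma}$ and $\lambda\in\mathbb{C}$ the equality $|\lambda a_i|^2=|\lambda|^2|a_i|^2$ gives $\left(\sum_{i\in\gamma}|\lambda a_i|^2\right)^{1/2}=|\lambda|\left(\sum_{i\in\gamma}|a_i|^2\right)^{1/2}$, hence $\|\lambda a\|_0=|\lambda|\,\|a\|_0$, and in particular $\|0\|_0=0$. If $\|a\|_0=0$, then $\left(\sum_{i\in\gamma}|a_i|^2\right)^{1/2}=0$ in $S(M,\tau)$, so $\sum_{i\in\gamma}|a_i|^2=0$; since every summand is positive this forces $|a_i|^2=0$, i.e.\ $a_i=0$, for each $i\in\gamma$, so $a=0$.

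For the triangle inequality I would fix an index $i_0\in I$ and associate to $a=\{a_i\}_{i\in\gamma}$ the ``column'' element $\widehat a:=\sum_{i\in\gamma}a_i\otimes u_{i,i_0}$. This is a finite sum, so $\widehat a\in E\left(M\otimes B(l_2(I))\right)$ by Proposition \ref{pr_2_4} and linearity of that space. Using $u_{i_0,i}u_{k,i_0}=\delta_{ik}p_{i_0}$ one gets $\widehat a^{\,*}\widehat a=\left(\sum_{i\in\gamma}|a_i|^2\right)\otimes p_{i_0}$, whence, exactly as in the proof of Proposition \ref{pr_2_4}, $|\widehat a|=\left(\sum_{i\in\gamma}|a_i|^2\right)^{1/2}\otimes p_{i_0}$, and since $tr(p_{i_0})=1$,
$$\mu_t(\widehat a)=\mu_t\!\left(\Bigl(\sum_{i\in\gamma}|a_i|^2\Bigr)^{1/2}\right),\qquad \|\widehat a\|_{E(M\otimes B(l_2(I)))}=\|a\|_0 .$$
Padding families with zeros to a common index set shows that $a\mapsto\widehat a$ is additive with respect to the addition of $E_0(M,\tau,I)$, so for $a,b\in E_0(M,\tau,I)$
$$\|a+b\|_0=\|\widehat a+\widehat b\|_{E(M\otimes B(l_2(I)))}\le\|\widehat a\|_{E(M\otimes B(l_2(I)))}+\|\widehat b\|_{E(M\otimes B(l_2(I)))}=\|a\|_0+\|b\|_0 ,$$
because $\|\cdot\|_{E(M\otimes B(l_2(I)))}$ is a norm, $E\left(M\otimes B(l_2(I))\right)$ being a fully symmetric space in $S\left(M\otimes B(l_2(I)),\tau\otimes tr\right)$.

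The main obstacle is precisely the subadditivity: the elementary singular-number estimates recorded just before the statement give only $\mu_t\!\left((\sum_{i}|a_i|^2)^{1/2}\right)\le\sum_i\sigma_{2^i}(\mu_t(|a_i|))$, which is far too lossy to yield the triangle inequality, and the point of the column construction is to transport the question to the genuine norm of $E\left(M\otimes B(l_2(I))\right)$, where subadditivity is already available. A routine point to keep in view is that $\widehat a$, and hence $\mu_t(\widehat a)$, does not depend on the auxiliary index $i_0$ nor on the bookkeeping set $\gamma$, which is immediate once families are padded by zeros.
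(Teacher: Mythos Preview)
Your proof is correct and follows essentially the same route as the paper: the column embedding $a\mapsto\sum_{i\in\gamma}a_i\otimes u_{i,j_0}$ into $E\bigl(M\otimes B(l_2(I))\bigr)$, the computation $|\widehat a|=\bigl(\sum_i|a_i|^2\bigr)^{1/2}\otimes p_{j_0}$, and the appeal to Proposition~\ref{pr_2_4} to identify $\|\widehat a\|_{E(M\otimes B(l_2(I)))}$ with $\|a\|_0$ are exactly what the paper does. The only cosmetic difference is that the paper uses the embedding to verify all three norm axioms at once, whereas you check homogeneity and definiteness directly and reserve the embedding for the triangle inequality.
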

\begin{proof} Fix an index  $j_0 \in I$ and consider a linear operator $T:
E_0 \left( {M, \tau,I} \right) \to E \left( {M \otimes B\left(
{l_2 \left( I \right)} \right)} \right)$ defined by $T\left( a
\right) = \sum\limits_{i \in \gamma}^{} {a_i \otimes u_{ij_0 } }$,
where $a = \left\{ {a_i } \right\}_{i \in \gamma } \in  E_0 \left(
{M, \tau,I} \right)$. By Proposition \ref{pr_2_4} we have
that $T\left( a \right) \in E \left( {M \otimes B\left( {l_2
\left( I \right)} \right)} \right)$. Since
\begin{eqnarray*} |T ( a ) |^2  &=& T^ *
\left( a \right)T\left( a \right) = \left( {\sum\limits_{i \in
\gamma } {a_i  \otimes u_{ij_0 } } } \right)^
*  \left( {\sum\limits_{i \in \gamma } {a_i  \otimes u_{ij_0 } } }
\right) \\
&=& \left( {\sum\limits_{i \in \gamma } {a_i^* \otimes u_{j_0 i} }
} \right)\left( {\sum\limits_{i \in \gamma } {a_i \otimes u_{ij_0
} } } \right) \\
&=& \sum\limits_{i \in \gamma } {a_i^
*  a_i  \otimes } u_{j_0 j_0}  = \left( {\sum\limits_{i \in \gamma
} {\left| {a_i }  \right|}^2 } \right) \otimes p_{j_0}
\end{eqnarray*}  we have $|T(a)|= {\left(
{\sum\limits_{i \in \gamma }^{} {\left| {a_i} \right|^2 } }
\right)^{1/2} \otimes p_{j_0} }.$  Consequently, by proposition
\ref{pr_2_4},
$$\left\| {T\left( a \right)} \right\|_{E \left( {M \otimes
B\left( {l_2 \left( I \right)} \right)} \right)}= \|{\left(
{\sum\limits_{i \in \gamma }^{} {\left| {a_i} \right|^2 } }
\right)^{1/2} \otimes p_{j_0} }\|_{{E \left( {M \otimes B\left(
{l_2 \left( I \right)} \right)} \right)}}=$$
$$\|{\left(
{\sum\limits_{i \in \gamma }^{} {\left| {a_i} \right|^2 } }
\right)^{1/2} }\|_{E(M,\tau)}=\|a\|_0 .$$
If $\|a\|_0 =0$, then
$a_i=0$ for every $i\in\gamma,$ i.e. $a=0.$ Furthermore, $\|\lambda
a\|_0 = \left\| {T\left(\lambda a \right)} \right\|_{E \left( {M
\otimes B\left( {l_2 \left( I \right)} \right)} \right)} =
|\lambda|\left\| {T\left( a \right)} \right\|_{E \left( {M \otimes
B\left( {l_2 \left( I \right)} \right)} \right)}=|\lambda| \|
a\|_0$  for all  $a  \in E_0 \left( {M, \tau,I} \right),$ $\lambda
\in \mathbb{C}.$ Similarly, for any  $a,b  \in E_0 \left( {M,
\tau,I} \right)$ we have
$$\| a+b \|_0 = \left\| {T\left( a+b
\right)} \right\|_{E \left( {M \otimes B\left( {l_2 \left( I
\right)} \right)} \right)} \leq $$
$$  \leq  \left\| {T\left( a \right)}
\right\|_{E \left( {M \otimes B\left( {l_2 \left( I \right)}
\right)} \right)} +                  \left\| {T\left( b \right)}
\right\|_{E \left( {M \otimes B\left( {l_2 \left( I \right)}
\right)} \right)}=\|a\|_0 + \|b\|_0.$$
\end{proof}

 Denote by $E(M,\tau,I)$  the set of all $\{a_i\}_{i\in I}\subset
E(M,\lambda)$ with $ \|\{a_i\}_{i\in
I}\|_{E(M,\tau,I)}:=\mathop {\sup }\limits_{\gamma  \in \Gamma}
\left\| {\left( {\sum\limits_{i \in \gamma }^{} {\left| {a_i }
\right|^2 } } \right)^{1/2} } \right\|_{E(M,\tau)}<\infty.$ Let us
define algebraic operations on  $E(M,\tau,I)$  by the assumption
$$\{a_i\}_{i\in I} + \{b_i\}_{i\in I}= \{a_i + b_i\}_{i\in I},$$ $$\\
\lambda\{a_i\}_{i\in I}=  \{\lambda a_i\}_{i\in I}, \lambda
\in\mathbb{C}.$$
\begin{prop}
\label{pr_3_2}
$E(M,\tau,I)$ is a normed space.
\end{prop}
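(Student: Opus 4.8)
The plan is to reduce everything to Proposition~\ref{pr_3_1} and Proposition~\ref{pr_2_4} by extending the operator $T$ used there from $E_0(M,\tau,I)$ to all of $E(M,\tau,I)$. First I would fix an index $j_0\in I$ and, for $a=\{a_i\}_{i\in I}\in E(M,\tau,I)$, consider the family of partial sums $T_\gamma(a)=\sum_{i\in\gamma}a_i\otimes u_{ij_0}$ indexed by $\gamma\in\Gamma$. By the computation in the proof of Proposition~\ref{pr_3_1} one has $|T_\gamma(a)|=\bigl(\sum_{i\in\gamma}|a_i|^2\bigr)^{1/2}\otimes p_{j_0}$, hence $\|T_\gamma(a)\|_{E(M\otimes B(l_2(I)))}=\bigl\|(\sum_{i\in\gamma}|a_i|^2)^{1/2}\bigr\|_{E(M,\tau)}$, which is increasing in $\gamma$ and bounded by $\|a\|_{E(M,\tau,I)}$. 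Using the Fatou-type behaviour of the norm on the fully symmetric space $E(M\otimes B(l_2(I)))$ (the family $|T_\gamma(a)|$ is increasing and norm-bounded, so it has a supremum $x_a$ in the space with $\|x_a\|=\sup_\gamma\|T_\gamma(a)\|$), I would check that $T_\gamma(a)$ converges in measure, and therefore that $T(a):=\sum_{i\in I}a_i\otimes u_{ij_0}$ is a well-defined element of $E(M\otimes B(l_2(I)))$ with $\|T(a)\|_{E(M\otimes B(l_2(I)))}=\|a\|_{E(M,\tau,I)}$.

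Once $T$ is defined on all of $E(M,\tau,I)$, I would verify it is linear: additivity follows since $T_\gamma(a+b)=T_\gamma(a)+T_\gamma(b)$ for every $\gamma$ and passing to the limit in measure is compatible with addition, and homogeneity is immediate. Then the three norm axioms transfer verbatim from the structure on $E(M\otimes B(l_2(I)))$: if $\|a\|_{E(M,\tau,I)}=0$ then $|T_\gamma(a)|=0$ for all finite $\gamma$, forcing each $a_i=0$ and hence $a=0$; homogeneity gives $\|\lambda a\|=|\lambda|\,\|a\|$; and the triangle inequality reads
$$\|a+b\|_{E(M,\tau,I)}=\|T(a+b)\|_{E(M\otimes B(l_2(I)))}=\|T(a)+T(b)\|_{E(M\otimes B(l_2(I)))}\le\|T(a)\|+\|T(b)\|=\|a\|_{E(M,\tau,I)}+\|b\|_{E(M,\tau,I)}.$$
It also follows that $E(M,\tau,I)$ is genuinely closed under the stated algebraic operations, since $\|a+b\|<\infty$ and $\|\lambda a\|<\infty$ whenever $a,b\in E(M,\tau,I)$.

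The main obstacle is the convergence step: one must show that the net $\{T_\gamma(a)\}_{\gamma\in\Gamma}$ actually converges (in measure, and in fact to the supremum $x_a$ of $\{|T_\gamma(a)|\}$ in an appropriate sense) rather than merely being norm-bounded, so that $T(a)=\sum_{i\in I}a_i\otimes u_{ij_0}$ has unambiguous meaning. The point is that the increments $T_{\gamma'}(a)-T_\gamma(a)$ for $\gamma\subset\gamma'$ satisfy $|T_{\gamma'}(a)-T_\gamma(a)|=\bigl(\sum_{i\in\gamma'\setminus\gamma}|a_i|^2\bigr)^{1/2}\otimes p_{j_0}$, whose norm is $\|x_a\|$-Cauchy because $\sup_\gamma\|(\sum_{i\in\gamma}|a_i|^2)^{1/2}\|_{E(M,\tau)}<\infty$; combined with the fact established in the preliminaries that norm convergence in a fully symmetric space implies convergence in measure, this yields a Cauchy net in measure, and completeness of $(S(M\otimes B(l_2(I))),t_\tau)$ provides the limit. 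I would phrase this carefully, remark that the identification $\|T(a)\|=\|a\|_{E(M,\tau,I)}$ then makes $T$ an isometry onto its range, and conclude that $\|\cdot\|_{E(M,\tau,I)}$ is a norm.
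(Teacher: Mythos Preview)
Your argument has a genuine gap in the convergence step, and in fact the step cannot be repaired under the hypotheses of Proposition~\ref{pr_3_2}. You assert that the net $\{T_\gamma(a)\}_{\gamma\in\Gamma}$ is Cauchy (in norm, hence in measure) ``because $\sup_\gamma\|(\sum_{i\in\gamma}|a_i|^2)^{1/2}\|_{E(M,\tau)}<\infty$''. Boundedness of the partial norms does \emph{not} imply the increments are small. Take $M=L_\infty([0,\infty))$, $E=L_\infty$, $I=\mathbb N$, and $a_i=\chi_{[i-1,i)}$. Then $\|a\|_{E(M,\tau,I)}=1$, yet $|T_{\gamma'}(a)-T_\gamma(a)|=\chi_{\bigcup_{i\in\gamma'\setminus\gamma}[i-1,i)}\otimes p_{j_0}$ has $\mu_\delta$ equal to $1$ whenever $|\gamma'\setminus\gamma|\ge\delta$; the net is not Cauchy in norm and not Cauchy in measure either. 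Your appeal to a ``Fatou-type behaviour'' of $E(M\otimes B(l_2(I)))$ is likewise illegitimate here: the Fatou property is an \emph{additional} hypothesis, introduced only in Proposition~\ref{pr_3_3}, not in Proposition~\ref{pr_3_2}. So the global operator $T(a)$ need not exist, and the route through an isometry on all of $E(M,\tau,I)$ collapses.

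The paper's proof avoids any limit procedure. Since $\|a\|_{E(M,\tau,I)}$ is \emph{defined} as $\sup_{\gamma\in\Gamma}\bigl\|(\sum_{i\in\gamma}|a_i|^2)^{1/2}\bigr\|_{E(M,\tau)}$, one only needs the triangle inequality at the level of each finite $\gamma$, and that is exactly Proposition~\ref{pr_3_1}: for fixed $\gamma$,
\[
\Bigl\|\bigl(\textstyle\sum_{i\in\gamma}|a_i+b_i|^2\bigr)^{1/2}\Bigr\|_{E(M,\tau)}
=\|\{a_i+b_i\}_{i\in\gamma}\|_0
\le\|\{a_i\}_{i\in\gamma}\|_0+\|\{b_i\}_{i\in\gamma}\|_0
\le\|a\|_{E(M,\tau,I)}+\|b\|_{E(M,\tau,I)}.
\]
Taking the supremum over $\gamma$ on the left gives the triangle inequality and closure under addition in one stroke; homogeneity and definiteness are immediate. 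No Fatou property, no order continuity, and no convergence of $T_\gamma(a)$ are needed. The extension of $T$ to $E(M,\tau,I)$ that you attempt is carried out later in the paper (after Proposition~\ref{pr_3_3}), and there it genuinely uses both the Fatou property and order continuous norm.
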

\begin{proof}It is clear that  $\lambda\{a_i\}_{i\in I}\in
E(M,\tau,I)$ and 
$$ \|\lambda\{ a_i\}_{i\in I}\|_{E(M,\tau,I)}=
|\lambda|\|\{a_i\}_{i\in I}\|_{E(M,\tau,I)}$$
for all
 $\{a_i\}_{i\in I}\in E(M,\tau,I), \lambda \in
\mathbb{C}.$  Let $\{a_i\}_{i\in I}, \{b_i\}_{i\in I} \ \in
E(M,\tau,I).$ For every $\gamma\in\Gamma$ we have

 $$\left\| {\left( {\sum\limits_{i \in \gamma }^{} {\left| {a_i  +
b_i } \right|^2 } } \right)^{1/2} } \right\|_{E(M,\tau)}  =
\left\| {\left\{ {a_i  + b_i } \right\}_{i \in \gamma}^{} }
\right\|_0 \leq \left\| {\left\{ {a_i } \right\}_{i \in \gamma }^{}
} \right\|_0 + \left\| {\left\{ {b_i } \right\}_{i \in
\gamma }^{} } \right\|_0 =$$
$$= \left\| {\left( {\sum\limits_{i \in \gamma }^{} {\left| {a_i }
\right|^2 } } \right)^{1/2} } \right\|_{E(M,\tau)}  + \left\|
{\left( {\sum\limits_{i \in \gamma }^{} {\left| {b_i } \right|^2 }
} \right)^{1/2} } \right\|_{E(M,\tau)} \leq $$
$$ \leq \mathop {\sup }\limits_{\gamma \in \Gamma} \left\| {\left({\sum\limits_{i \in \gamma }^{} {\left| {a_i } \right|^2 } }
\right)^{1/2} } \right\|_{E(M,\tau)}  + \mathop {\sup
}\limits_{\gamma \in \Gamma} \left\| {\left( {\sum\limits_{i \in
\gamma }^{} {\left| {b_i } \right|^2 } } \right)^{1/2}}
\right\|_{E(M,\tau)}.$$

 Consequently,
 $$\mathop {\sup }\limits_{\gamma  \in \Gamma}
\left\| {\left( {\sum\limits_{i \in \gamma }^{} {\left| {a_i  +
b_i } \right|^2 } } \right)^{1/2} } \right\|_{E(M,\tau)}  \le
\mathop {\sup }\limits_{\gamma \in\Gamma} \left\| {\left(
{\sum\limits_{i \in \gamma }^{} {\left| {a_i } \right|^2 } }
\right)^{1/2} } \right\|_{E(M,\tau)}  +$$
$$+ \mathop {\sup
}\limits_{\gamma \in \Gamma} \left\| {\left( {\sum\limits_{i \in
\gamma }^{} {\left| {b_i } \right|^2 } } \right)^{1/2} }
\right\|_{E(M,\tau)}  < \infty,$$
and therefore $\left\{{a_i + b_i} \right\}_{i \in I} \in E ( M,\tau,I )$
 and
$$ \|\{ a_i\}_{i\in I}\ + \{ b_i\}_{i\in I}\|_{E(M,\tau,I)}\leq\|\{
a_i\}_{i\in I}\ \|_{E(M,\tau,I)} +\|\ \{ b_i\}_{i\in
I}\|_{E(M,\tau,I)}.$$

\end{proof}

\begin{prop}
\label{pr_3_3}
If $E([0, \infty ])$ is a fully symmetric Banach function
space and $E([0,\infty ])$ has the Fatou property,  then
$(E(M,\tau,I), \|\cdot\|_{E(M,\tau,I)} )$ is a Banach space. In
addition, if $E([0,\infty ])$ has order continuous norm, then
$E_0(M,\tau,I)$ is dense in $E(M,\tau,I).$
\end{prop}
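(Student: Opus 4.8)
The plan is to prove the two assertions in turn: completeness of $E(M,\tau,I)$ (which, with Proposition \ref{pr_3_2}, gives that it is a Banach space), and then density of $E_0(M,\tau,I)$. In both cases the work is pushed down to the scalar space $E(M,\tau)$, using that $E(M,\tau)$ inherits from $E$ the Fatou property and --- for the density statement --- order continuity of the norm (see \cite{DDP}); in particular, the Fatou property of $E(M,\tau)$ ensures that each ball $\{z\in E(M,\tau):\|z\|_{E(M,\tau)}\le r\}$ is closed in $(S(M,\tau),t_\tau)$. I will also use that $t\mapsto t^{1/2}$ is operator monotone on $S(M,\tau)_+$ and that, by Theorem \ref{t_2_1} together with the fact that $(S(M,\tau),t_\tau)$ is a topological $*$-algebra, finite sums, products, adjoints and continuous functional calculus are $t_\tau$-continuous.

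For completeness, let $\{a^{(n)}\}_n$, $a^{(n)}=\{a^{(n)}_i\}_{i\in I}$, be Cauchy in $E(M,\tau,I)$. Choosing $\gamma=\{i\}$ in the defining supremum gives $\|a^{(n)}_i-a^{(m)}_i\|_{E(M,\tau)}\le\|a^{(n)}-a^{(m)}\|_{E(M,\tau,I)}$, so each coordinate sequence is Cauchy in the Banach space $E(M,\tau)$, hence converges to some $a_i\in E(M,\tau)$, and in particular $a^{(n)}_i\to a_i$ with respect to $t_\tau$. Put $a=\{a_i\}_{i\in I}$, fix $\varepsilon>0$ and choose $N$ with $\|a^{(n)}-a^{(m)}\|_{E(M,\tau,I)}\le\varepsilon$ for $n,m\ge N$; fix also $n\ge N$ and $\gamma\in\Gamma$. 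As $m\to\infty$, the $t_\tau$-continuity of the $*$-algebra operations and of $z\mapsto z^{1/2}$ on $S(M,\tau)_+$ (Theorem \ref{t_2_1} with $f(t)=|t|^{1/2}$) yields $(\sum_{i\in\gamma}|a^{(n)}_i-a^{(m)}_i|^2)^{1/2}\to(\sum_{i\in\gamma}|a^{(n)}_i-a_i|^2)^{1/2}$ in $t_\tau$, while each term on the left has $E(M,\tau)$-norm $\le\varepsilon$ for $m\ge N$. Closedness of the $\varepsilon$-ball of $E(M,\tau)$ in $t_\tau$ gives $\|(\sum_{i\in\gamma}|a^{(n)}_i-a_i|^2)^{1/2}\|_{E(M,\tau)}\le\varepsilon$, and taking the supremum over $\gamma\in\Gamma$ shows that the supremum defining $\|a^{(n)}-a\|_{E(M,\tau,I)}$ is $\le\varepsilon$, hence finite, for every $n\ge N$. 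Thus $a^{(n)}-a\in E(M,\tau,I)$, so $a\in E(M,\tau,I)$ by Proposition \ref{pr_3_2}, and $a^{(n)}\to a$ in $E(M,\tau,I)$.

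For the density statement assume in addition that $E$ has order continuous norm, fix $a=\{a_i\}_{i\in I}\in E(M,\tau,I)$, and for $\gamma\in\Gamma$ set $s_\gamma=\sum_{i\in\gamma}|a_i|^2$ and $a_\gamma=\{a_i\}_{i\in\gamma}\in E_0(M,\tau,I)$. By operator monotonicity of the square root, $\{s_\gamma^{1/2}\}_{\gamma\in\Gamma}$ is an increasing net in $E(M,\tau)_+$ with $\sup_\gamma\|s_\gamma^{1/2}\|_{E(M,\tau)}=\|a\|_{E(M,\tau,I)}<\infty$, so the Fatou property provides $y:=\sup_\gamma s_\gamma^{1/2}\in E(M,\tau)$. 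Recalling that an increasing net in $S_h(M,\tau)$ which has a supremum converges to it in measure, we get $s_\gamma^{1/2}\to y$ in $t_\tau$, whence $s_\gamma=(s_\gamma^{1/2})^2\to y^2$ in $t_\tau$ by Theorem \ref{t_2_1}; since $\{s_\gamma\}$ is increasing this forces $s_\gamma\uparrow y^2$, i.e. $y^2-s_\gamma\downarrow 0$. For $\delta\supseteq\gamma$ in $\Gamma$ we have $0\le\sum_{i\in\delta\setminus\gamma}|a_i|^2=s_\delta-s_\gamma\le y^2-s_\gamma$, hence $(s_\delta-s_\gamma)^{1/2}\le(y^2-s_\gamma)^{1/2}\le y$, and therefore
$$\|a-a_\gamma\|_{E(M,\tau,I)}=\sup_{\delta\supseteq\gamma}\left\|(s_\delta-s_\gamma)^{1/2}\right\|_{E(M,\tau)}\le\left\|(y^2-s_\gamma)^{1/2}\right\|_{E(M,\tau)}.$$
Finally, $\{(y^2-s_\gamma)^{1/2}\}_\gamma$ is a decreasing net in $E(M,\tau)_+$ which, by Theorem \ref{t_2_1}, tends to $0$ in $t_\tau$, so $(y^2-s_\gamma)^{1/2}\downarrow 0$; order continuity of the norm of $E(M,\tau)$ then gives $\|(y^2-s_\gamma)^{1/2}\|_{E(M,\tau)}\downarrow 0$. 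Hence $\|a-a_\gamma\|_{E(M,\tau,I)}\to 0$, and $E_0(M,\tau,I)$ is dense in $E(M,\tau,I)$.

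I expect the delicate point to be the identity $s_\gamma\uparrow y^2$ and, more broadly, the need to route every order-theoretic manipulation of the operators $\sum_{i\in\gamma}|a_i|^2$ through convergence in measure and Theorem \ref{t_2_1}: the squaring map is neither order monotone nor order continuous for non-commuting positive operators, so the identities one would write down immediately in the commutative case have to be obtained indirectly. The second point requiring care is the transfer of the Fatou property and of order continuity from $E$ to $E(M,\tau)$, together with the reformulation of the Fatou property as $t_\tau$-closedness of balls, for which I rely on \cite{DDP}.
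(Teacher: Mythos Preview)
Your argument follows the paper's route almost exactly. The completeness proof is identical in substance: coordinatewise limits via $\gamma=\{i\}$, then pass to the limit in $\bigl(\sum_{i\in\gamma}|a_i^{(n)}-a_i^{(m)}|^2\bigr)^{1/2}$ using Theorem~\ref{t_2_1} and the $t_\tau$-closedness of balls in $E(M,\tau)$ coming from the Fatou property (the paper cites \cite{DDST} for this, not \cite{DDP}). For density you are slightly more direct than the paper---you bound $\|a-a_\gamma\|_{E(M,\tau,I)}$ outright by $\|(y^2-s_\gamma)^{1/2}\|_{E(M,\tau)}$, whereas the paper first shows $\{a_\gamma\}_\gamma$ is Cauchy, takes its limit $c$, and then checks $c=a$ coordinatewise---but the core mechanism is the same.

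There is one genuine slip. The assertion ``an increasing net in $S_h(M,\tau)$ which has a supremum converges to it in measure'' is false as stated: with $M=L^\infty(\mathbb R)$ and Lebesgue trace, $x_n=\chi_{[-n,n]}\uparrow \mathbf 1$ in $S_h(M,\tau)$, yet $\mu_t(\mathbf 1-x_n)=1$ for every $t>0$ and every $n$, so $x_n\not\to\mathbf 1$ in $t_\tau$. What actually gives you $s_\gamma^{1/2}\to y$ in $t_\tau$---and what the paper invokes at exactly this point---is the order continuity of $\|\cdot\|_{E(M,\tau)}$: from $s_\gamma^{1/2}\uparrow y$ one has $y-s_\gamma^{1/2}\downarrow 0$ in $E(M,\tau)_+$, hence $\|y-s_\gamma^{1/2}\|_{E(M,\tau)}\downarrow 0$, and norm convergence in $E(M,\tau)$ implies $t_\tau$-convergence. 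With that one-line replacement your proof is complete and matches the paper's.
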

\begin{proof}Let $x_n=\{a_{i}^{(n)}\}_{i\in I}\in E(M,\tau,I)$ and $\|x_{m}-x_{n}\|_{E(M,\tau,I)}\rightarrow
0$ as $m,n\rightarrow\infty$ i.e. for any $\varepsilon
> 0$ there exists a number $n\left( \varepsilon \right)$, such
that for $n, m \ge n\left( \varepsilon \right)$ and $\gamma \in
\Gamma$ the inequality
$$\left\| {\left( {\sum\limits_{i \in \gamma }^{} {\left|
{a_i^{\left( m \right)}  - a_i^{\left( n \right)} } \right|^2 } }
\right)^{1/2} } \right\|_{E(M,\tau)}  < \varepsilon \eqno{(1)}
$$
holds. From the inequality
$$\left({\sum\limits_{i \in \gamma }^{}
{\left| {a_i^{\left( m \right)} -a_i^{\left( n \right)} }
\right|^2 } } \right)^{1/2} \ge \left( {\left| {a_i^{\left( m
\right)}  - a_i^{\left( n \right)} } \right|^2 } \right)^{1/2} =
\left| {a_i^{\left( m \right)}  - a_i^{\left( n \right)} }
\right|$$
[\cite{MCh}, Ch.2,Theorem 2.4.2] using (1)   we infer that
$\left\| {a_i^{\left( m \right)}  - a_i^{\left( n \right)} }
\right\|_{E(M,\tau)}  < \varepsilon $ for any $n, m \ge n\left(
\varepsilon \right)$,  $i \in \gamma $. Since the space $E (M,\tau)$ is
complete, there exists an element $a_i^{\left( 0 \right)} \in E
(M,\tau)$, such that $\left\| {a_i^{\left( n \right)} -
a_i^{\left( 0 \right)} } \right\|_{E(M,\tau)}  \to 0$ as  $n \to
\infty ,$ for any  $i \in I.$ Now let us show that $ x_0 = \left\{
{a_i^{\left( 0 \right)} } \right\}_{i \in I}^{}  \in E (M,\tau,I)
$ and $\left\| { x_n -  x_0 } \right\|_{E(M,\tau,I)} \to 0$ .
Since $a_i^{\left( n \right)} \buildrel {t_\tau } \over
\longrightarrow a_i^{\left( 0 \right)} $,  we  get $\left|
{a_i^{\left( m \right)} - a_i^{\left( n \right)} }
\right|\buildrel {t_\tau  } \over \longrightarrow \left|
{a_i^{\left( m \right)}  - a_i^{\left( 0 \right)} } \right|$  as
$n \to \infty $,  for every $i\in I$ (Theorem \ref{t_2_1}). Consequently,
$\sum\limits_{i \in \gamma }^{} {\left| {a_i^{\left( m \right)}  -
a_i^{\left( n \right)} } \right|^2 \buildrel {t_\tau } \over
\longrightarrow \sum\limits_{i \in \gamma }^{} {\left|
{a_i^{\left( m \right)}  - a_i^{\left( 0
\right)} } \right|^2 } } $   and by Theorem \ref{t_2_1} we have that \\
$\left( {\sum\limits_{i \in \gamma }^{} {\left| {a_i^{\left( m
\right)}  - a_i^{\left( n \right)} } \right|^2 } } \right)^{1/2}
\buildrel {t_\tau  } \over \longrightarrow \left( {\sum\limits_{i
\in \gamma }^{} {\left| {a_i^{\left( m \right)}  - a_i^{\left( 0
\right)} } \right|^2 } } \right)^{1/2} $ as $n \to \infty $. Since
the space $E(M,\tau)$ has the Fatou property we have that the unit
ball of $(E(M,\tau), \|\cdot\|_{E(M,\tau)})$ is closed in
$S(M,\tau)$ in the measure topology $t_\tau$ \cite{DDST}.
By (1) we obtain that
$$\|\left({\sum\limits_{i \in \gamma }^{} {\left| {a_i^{\left( m
\right)} -a_i^{\left( 0 \right)} } \right|^2 } } \right)^{1/2}
\|_{E(M,\tau)}\leq\varepsilon$$
 for all $m\geq n(\varepsilon),
\gamma\in\Gamma.$ Consequently,
$$\mathop {\sup }\limits_{\gamma \in \Gamma} \left\| {\left(
{\sum\limits_{i \in \gamma }^{} {\left| {a_i^{\left( m \right)}  -
a_i^{\left( 0 \right)} } \right|} ^2 } \right)^{1/2} }
\right\|_{E(M,\tau)} \le \varepsilon ,$$
 i.e. $\left\{ {a_i^{\left(m \right)}  - a_i^{\left( 0 \right)} } \right\} \in E
(M,\tau,I)$ and
$\left\| { x_m -  x_0 } \right\|_{E (M,\tau,I)} \le \varepsilon $
for $m \ge n\left( \varepsilon \right)$.  Hence $x_0 =  x_{_m } -
\left( { x_{_m } - x_0 } \right) \in E (M,\tau,I),$ and $\left\| {
x_{m } -
 x_{_0 } } \right\|_{E(M,\tau,I)} \to 0$ as  $m \to
\infty.$

We show now  that if  $E([0,\infty])$ has  order continuous norm,
then $E_0(M,\tau,I)$ is dense in $E(M,\tau,I).$  Let
$a=\{a_i\}_{i\in I}\in E(M,\tau,I)$ and
$a_\gamma=\{b^{(\gamma)}_i\}_{i\in I},$ where $b^{(\gamma)}_i=
a_i$ if $i\in \gamma,$  $b^{(\gamma)}_i= 0$ if $i\notin \gamma,$
$\gamma\in\Gamma.$ It is clear that $a_\gamma \in E_0(M,\tau,I)$
for all  $\gamma\in\Gamma.$ Fix an index $j_0,$ and consider the operator
$ T(a_\gamma)= \sum\limits_{i \in\gamma }^{} {a_i \otimes u_{ij_0
} }$ (see the proof of proposition \ref{pr_3_1}). Let $\gamma\leq\beta,
\gamma,\beta\in\Gamma.$ Since
$|T(a_\beta-a_\gamma)|=(\sum\limits_{i \in\
\beta\setminus\gamma}^{} {|a_i|^2)^{1/2} \otimes p_{j_0} }$ we
have that
$$\|a_\beta-a_\gamma\|_{E(M,\tau,I)}= \|T (a_\beta-a_\gamma)\|_{E
\left( M \otimes B\left( l_2 \left( I \right)\right)\right)}=
\|\sum\limits_{i \in\ \beta\setminus\gamma}^{} {|a_i|^2)^{1/2}}\|_{E(M,\tau)}.$$
The net  $0\leq z_\gamma:=(\sum\limits_{i \in\gamma}^{}
{|a_i|^2)^{1/2}} \in E(M,\tau) $ ($\gamma\in\Gamma$) is
increasing and $$\mathop {\sup }\limits_{\gamma \in
\Gamma} \left\| {z_\gamma } \right\|_{E(M,\tau)}= \mathop {\sup
}\limits_{\gamma \in \Gamma} \left\| {\left( {\sum\limits_{i \in
\gamma }^{} {\left| {a_i } \right|^2 } } \right)^{1/2} }
\right\|_{E(M,\tau)} =  \left\| a\right\|_{E(M,\tau, I)}<\infty.$$
Since $\left( {E(M,\tau),\left\| \cdot \right\|_{E (M,\tau)} }
\right)$ has the Fatou property and order continuous norm \cite
{DDST} it follows that there exists an operator $0\leq z \in E(M,\tau)$ such
that $z_\gamma \uparrow z$ and $\left\| {z - z_\gamma  }
\right\|_{E(M,\tau,I)} \downarrow 0.$  Consequently and $z_\gamma
\buildrel {t_\tau  } \over \longrightarrow z$ and  $z_\gamma ^2
\buildrel {t_\tau  } \over \longrightarrow z^2.$ Since
$z_\gamma^2=\sum\limits_{i \in\gamma}^{} {|a_i|^2}\leq
\sum\limits_{i \in\beta}^{} {|a_i|^2}= z_\beta$ for
$\gamma\leq\beta$, we have  $z_\gamma ^2 \uparrow z^2.$ Hence
$(z^2-z_\gamma ^2 )^{1/2}\buildrel {t_\tau  } \over
\longrightarrow 0$ (Theorem \ref{t_2_1}). Using the equalities $z^2-z_\gamma
^2 = \sum\limits_{i \in I}^{} {|a_i|^2}- \sum\limits_{i
\in\gamma}^{} {|a_i|^2}= \sum\limits_{I\setminus \gamma }^{}
{|a_i|^2}$ (the series converges in $t_\tau-$ topology) we
obtain that $(z^2-z_\gamma ^2 )^{1/2}\downarrow 0.$ The inequalities $0\leq
(z^2-z_\gamma ^2 )^{1/2}\leq z$ imply that $(z^2-z_\gamma ^2 )^{1/2}\in
E(M,\tau).$ Using the property of continuous norm
$\|\cdot\|_{E(M,\tau)}$  we get that  $\|(z^2-z_\gamma ^2
)^{1/2}\|_{E(M,\tau)}\downarrow 0.$ Consequently, for
$\gamma\leq\beta$ we have $$\|a_\beta-a_\gamma \|_{E(M,\tau, I)}=
\|(z_\beta^2-z_\gamma ^2 )^{1/2}\|_{E(M,\tau)}\leq \|(z^2-z_\gamma
^2 )^{1/2}\|_{E(M,\tau)}\downarrow 0.$$  This means that
$\{a_\gamma\}_{\gamma\in\Gamma}$ is a Cauchy net in the Banach space
$\left( {E(M,\tau,I),\left\| \cdot \right\|_{E (M,\tau,I)} }
\right).$ Hence there exists $c= \{c_i\}_{i\in I}\in E(M,\tau,
I)$ such that $\|c-a_\gamma \|_{E(M,\tau, I)}\rightarrow 0,$ in
particular, $\|b_i^{(\gamma)}-c_i \|_{E(M,\tau)}\rightarrow 0$ for
all $i\in I.$  This means that $c_i=a_i$ for every $i\in I,$ i.e.
$c=a.$ Thus $a= \left\|  \cdot \right\|_{E(M,\tau,I)} - \mathop
{\lim }\limits_{\gamma}
 {a_\gamma }.$ Consequently, $E_0(M,\tau,I)$ is dense in $E(M,\tau,I)$
\end{proof}

Following \cite {P} we call the constructed Banach space
$E(M,\tau,I)$ a non-commutative vector-valued fully symmetric
space, associated with fully symmetric Banach function space
$E([0,\infty)).$ In the case when $E([0,\infty))=
L_{p}([0,\infty)), p\geq1,$ the Banach spaces $L_{p}(M,\tau,I)$ are
investigated in \cite{P}.

 Let $E([0,\infty))$ be a fully symmetric
function space and let $E([0,\infty))$ has the Fatou property and order
continuous norm. Using the proof of Proposition \ref{pr_3_3} we define a
special linear isometry from $E(M,\tau, I)$ into $E(M \otimes
B(l_2(I))).$ Fix an index $j_0.$ Let $a= \{a_i\}_{i\in\Gamma}\in
E(M,\tau, I),$ $a_\gamma=\{b^{(\gamma)}_i\}_{i\in I}$ be as in the proof
of Proposition \ref{pr_3_3}. Since $\left\| { a -  a_\gamma }
\right\|_{E(M,\tau,I)} \to 0$ and $\left\| { a_\beta -  a_\gamma }
\right\|_{E(M,\tau,I)}=\|T (a_\beta)-T(a_\gamma)\|_{E \left( M
\otimes B\left( l_2 \left( I \right)\right)\right)}$, it follows that
$\{T(a_\gamma)\}_{\gamma\in \Gamma}$ is a Cauchy net in the Banach
space  $ (E \left( {M \otimes B\left( {l_2 \left( I
\right)}\mathbf{}\mathbb{} \right)} \right), \|\cdot\|_{E \left(
{M \otimes B\left( {l_2 \left( I \right)} \right)} \right)}).$
Consequently, there exists $\Phi(a)\in E \left( {M \otimes B\left(
{l_2 \left( I \right)} \right)} \right)$ such that $\|\Phi(a) - T(
a_\gamma)\|_{E \left( {M \otimes B\left( {l_2 \left( I \right)}
\right)} \right)}\rightarrow0.$ It is clear that if
$\{c_\gamma\}_{\gamma\in\Gamma}\in E_0(M,\tau,I)$ is an arbitrary
 net with $\|a-c_\gamma\|_{E(M,\tau,I)}\rightarrow 0,$ then $\|\cdot\|_{\left( {E(M \otimes B\left( {l_2 \left( I
\right)}\mathbf{}\mathbb{} \right)} \right))}- \mathop {\lim
}\limits_{\gamma}
 {T(c_\gamma)}= \Phi (a),$ i.e. map $\Phi$ is correctly defined.  From definition of $\Phi$ it follows that
 $\Phi$ is a linear map, in addition, $$\|\Phi (a)\|_{E \left( {M \otimes B\left( {l_2 \left( I \right)}
\right)} \right)} = \mathop {\lim
}\limits_{\gamma}\|T(a_\gamma)\|_{E \left( {M \otimes B\left( {l_2
\left( I \right)} \right)} \right)} = \mathop {\lim
}\limits_{\gamma}\|a_\gamma\|_{E(M,\tau,I)} =\|a\|_{E \left( M, \tau, I \right)} ,$$
i.e. $\Phi $ is an
isometric map. Since $\left( E (M,\tau,I), \left\|  \cdot
\right\|_{E (M,\tau,I)}\right) $ is a Banach space (Proposition
\ref{pr_3_3}), it follow that $\Phi \left( {E (M,\tau,I)} \right)$ is a closed linear
subspace of the Banach space $E \left( {M \otimes B\left( {l_2 \left( I \right)}
\right)} \right)$. Thus, we obtain the following corollary

\begin{cor}
\label{cor_3_4}
 $\Phi $ is a linear isometric map  from $\left( {E
(M,\tau,I), \left\| \cdot \right\|_{E (M,\tau,I)} } \right)$ onto a
  closed linear subspace in $E \left( {M \otimes B\left( {l_2
\left( I \right)} \right)} \right).$
\end{cor}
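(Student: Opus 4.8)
The plan is to simply assemble facts already established in the discussion preceding the statement, since Corollary \ref{cor_3_4} is essentially a summary. First I would recall that the map $\Phi$ was constructed as the pointwise limit (in the norm of $E(M\otimes B(l_2(I)))$) of the net $\{T(a_\gamma)\}_{\gamma\in\Gamma}$, where $T$ is the operator from the proof of Proposition \ref{pr_3_1} and $a_\gamma$ is the truncation of $a$ to the finite index set $\gamma$. Linearity of $\Phi$ is immediate: if $a,b\in E(M,\tau,I)$ and $\lambda\in\mathbb{C}$, then $(\lambda a+b)_\gamma=\lambda a_\gamma+b_\gamma$, $T$ is linear, and taking norm-limits preserves linear combinations, so $\Phi(\lambda a+b)=\lambda\Phi(a)+\Phi(b)$. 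The well-definedness of $\Phi$ (independence of the approximating net) was already checked in the text, so I would just cite that.

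Next I would record the isometry identity $\|\Phi(a)\|_{E(M\otimes B(l_2(I)))}=\|a\|_{E(M,\tau,I)}$, which was derived above from the chain $\|\Phi(a)\|=\lim_\gamma\|T(a_\gamma)\|_{E(M\otimes B(l_2(I)))}=\lim_\gamma\|a_\gamma\|_{E(M,\tau,I)}=\|a\|_{E(M,\tau,I)}$, the middle equality coming from the computation $|T(a_\gamma)|=(\sum_{i\in\gamma}|a_i|^2)^{1/2}\otimes p_{j_0}$ together with Proposition \ref{pr_2_4}, and the last equality from the monotone convergence $\|a_\gamma\|_{E(M,\tau,I)}\uparrow\|a\|_{E(M,\tau,I)}$ established in the proof of Proposition \ref{pr_3_3}. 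In particular $\Phi$ is injective, being isometric.

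Finally I would argue that the range $\Phi(E(M,\tau,I))$ is closed. Here the key point is that $\Phi$ is an isometric linear map and its domain $(E(M,\tau,I),\|\cdot\|_{E(M,\tau,I)})$ is a Banach space by Proposition \ref{pr_3_3}. Given a sequence $\Phi(x_n)$ converging in $E(M\otimes B(l_2(I)))$, the sequence $\Phi(x_n)$ is Cauchy, hence $x_n$ is Cauchy in $E(M,\tau,I)$ by the isometry, hence $x_n\to x$ for some $x\in E(M,\tau,I)$ by completeness, and then $\Phi(x_n)\to\Phi(x)$ again by the isometry; so the limit lies in the range. Thus $\Phi$ is a linear isometry of $E(M,\tau,I)$ onto the closed subspace $\Phi(E(M,\tau,I))$ of $E(M\otimes B(l_2(I)))$. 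I do not anticipate any real obstacle: every ingredient is already in hand, and the only mild care needed is to make sure the isometry identity is quoted in the two-sided form $\|\Phi(a)\|=\|a\|$ rather than just as a bound, so that the closed-range argument goes through cleanly.
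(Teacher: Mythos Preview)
Your proposal is correct and follows essentially the same route as the paper: the paper likewise derives linearity from the definition of $\Phi$, the isometry from the chain $\|\Phi(a)\|=\lim_\gamma\|T(a_\gamma)\|=\lim_\gamma\|a_\gamma\|=\|a\|$, and then deduces that the range is closed from the completeness of $E(M,\tau,I)$ established in Proposition~\ref{pr_3_3}. Your only addition is to spell out the standard Cauchy-sequence argument for closedness of the image of an isometry on a Banach space, which the paper leaves implicit.
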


Now we show that $\Phi (E(M,\tau,I))$ is a complemented linear
subspace in the Banach space $E \left( {M \otimes B\left( {l_2 \left( I \right)}
\right)} \right).$
For a fixed $\gamma \in \Gamma$ we define the linear operator

$$P_\gamma  :E \left( {M \otimes B\left( {l_2 \left( I \right)}
\right)} \right) \to E \left( {M \otimes  B\left( {l_2 \left( I
\right)} \right)} \right)$$
by the equality
 $$P_\gamma
\left( x \right) = \sum\limits_{j \in \gamma }^{} {\left(
{\textbf{1} \otimes p_j } \right)} x\left( {\textbf{1} \otimes
p_{j_0 } } \right), x \in E \left( {M \otimes B\left( {l_2 \left(
I \right)} \right)} \right).$$

\begin{prop}
\label{pr_3_5}
 Let $\gamma, \beta  \in \Gamma, \gamma \leq \beta$. Then

 (i)  $P_\gamma $ is a bounded linear operator and $\left\|
{P_\gamma} \right\| \le 1$;

(ii)  $P_\gamma P_\beta   = P_\beta P_\gamma   = P_\gamma; $

(iii) $P_\gamma ^2  = P_\gamma  ;$

(iv) $P_\gamma  \left( {E \left( {M \otimes  B\left( {l_2 \left( I
\right)} \right)} \right)} \right) \subset \Phi \left( {E
(M,\tau,I)} \right).$
\end{prop}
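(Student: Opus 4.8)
The plan is to verify the four assertions about $P_\gamma$ essentially by direct computation with matrix units, using Proposition \ref{pr_2_3} and Proposition \ref{pr_2_4}, together with the description of $\Phi$ and $T$ from the proof of Proposition \ref{pr_3_3} and Corollary \ref{cor_3_4}.

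For (i), I would first observe that $\textbf{1}\otimes p_j$ and $\textbf{1}\otimes p_{j_0}$ are projections in $M\otimes B(l_2(I))$, and that for a finite set $\gamma$ the projection $q_\gamma:=\textbf{1}\otimes(\sum_{j\in\gamma}p_j)=\sum_{j\in\gamma}(\textbf{1}\otimes p_j)$ satisfies $q_\gamma\le\textbf{1}$. Writing $P_\gamma(x)=q_\gamma\,x\,(\textbf{1}\otimes p_{j_0})$, the operator $P_\gamma$ is a composition of left multiplication by a projection and right multiplication by a projection. Since multiplication on either side by a contraction (in particular by a projection) does not increase the norm in any fully symmetric space $E(M\otimes B(l_2(I)))$ — this follows from the fact that $\mu_t(ax b)\le\|a\|_\infty\,\|b\|_\infty\,\mu_t(x)$ for $a,b\in M\otimes B(l_2(I))$ and the full symmetry of $E(\cdot)$ — we get $\|P_\gamma(x)\|\le\|x\|$, hence $\|P_\gamma\|\le1$. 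Linearity is immediate from the formula.

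For (ii) and (iii), the key is the orthogonality relations among the $\textbf{1}\otimes p_j$: namely $(\textbf{1}\otimes p_i)(\textbf{1}\otimes p_j)=\delta_{ij}(\textbf{1}\otimes p_j)$. To compute $P_\gamma P_\beta(x)$ I substitute $P_\beta(x)=\sum_{k\in\beta}(\textbf{1}\otimes p_k)x(\textbf{1}\otimes p_{j_0})$ into $P_\gamma$, obtaining $\sum_{j\in\gamma}\sum_{k\in\beta}(\textbf{1}\otimes p_j)(\textbf{1}\otimes p_k)x(\textbf{1}\otimes p_{j_0})(\textbf{1}\otimes p_{j_0})$. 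The inner product $(\textbf{1}\otimes p_j)(\textbf{1}\otimes p_k)$ vanishes unless $j=k$, and $(\textbf{1}\otimes p_{j_0})^2=\textbf{1}\otimes p_{j_0}$, so the double sum collapses to $\sum_{j\in\gamma\cap\beta}(\textbf{1}\otimes p_j)x(\textbf{1}\otimes p_{j_0})$. Since $\gamma\le\beta$ means $\gamma\subset\beta$, we have $\gamma\cap\beta=\gamma$, giving $P_\gamma P_\beta=P_\gamma$; the same computation with the roles of $\gamma,\beta$ swapped gives $P_\beta P_\gamma=P_\gamma$ (again $\beta\cap\gamma=\gamma$). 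Taking $\beta=\gamma$ yields $P_\gamma^2=P_\gamma$, which is (iii).

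For (iv), I would show that any $x\in E(M\otimes B(l_2(I)))$ has $P_\gamma(x)$ of the form $\sum_{j\in\gamma}c_j\otimes u_{jj_0}$ for suitable $c_j\in E(M)$, i.e. $P_\gamma(x)=T(\{c_j\}_{j\in\gamma})\in\Phi(E_0(M,\tau,I))\subset\Phi(E(M,\tau,I))$. Concretely, for each $j$ the compression $(\textbf{1}\otimes p_j)x(\textbf{1}\otimes p_{j_0})$ lies in $E(M\otimes p_j B(l_2(I))p_{j_0})$, and by Proposition \ref{pr_2_3} applied in the second leg, $p_j B(l_2(I)) p_{j_0}=\mathbb{C}u_{jj_0}$, so this compression equals $c_j\otimes u_{jj_0}$ for a unique $c_j\in S(M,\tau)$; Proposition \ref{pr_2_4} then gives $c_j\in E(M)$ with $\|c_j\|_{E(M)}=\|c_j\otimes u_{jj_0}\|\le\|x\|$. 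Summing over the finite set $\gamma$, $P_\gamma(x)=\sum_{j\in\gamma}c_j\otimes u_{jj_0}=T(\{c_j\}_{j\in\gamma})$, and since $\{c_j\}_{j\in\gamma}\in E_0(M,\tau,I)\subset E(M,\tau,I)$ and $\Phi$ restricted to $E_0(M,\tau,I)$ agrees with $T$ (by construction of $\Phi$ via constant nets), we conclude $P_\gamma(x)\in\Phi(E(M,\tau,I))$.

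The main obstacle I anticipate is making the identification in (iv) rigorous at the level of $\tau$-measurable operators rather than just bounded ones: one must justify that the second-leg compression $p_j(\cdot)p_{j_0}$ of a general element of $E(M\otimes B(l_2(I)))$ really produces an element of the form $c_j\otimes u_{jj_0}$ with $c_j$ measurable with respect to $(M,\tau)$ — this requires care with domains and with the fact that $E(M\otimes B(l_2(I)))\subset L_1+L_\infty$, but it follows from Proposition \ref{pr_2_3} once one notes that $(\textbf{1}\otimes p_j)$ and $(\textbf{1}\otimes p_{j_0})$ are $(\tau\otimes tr)$-finite-rank in the second variable in the appropriate sense, so the compression is genuinely supported on the "$(j,j_0)$-matrix entry". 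Everything else is bookkeeping with the orthogonality of the $\textbf{1}\otimes p_j$ and the contractivity of two-sided multiplication by projections.
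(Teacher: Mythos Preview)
Your treatment of (i)--(iii) is correct and, for (i), actually more efficient than the paper's: the paper computes $|P_\gamma(x)|^2$, bounds it by $|x(\textbf{1}\otimes p_{j_0})|^2$, and then invokes a factorization result from \cite{BChS} to pass to singular values, whereas you simply observe $P_\gamma(x)=q_\gamma\,x\,(\textbf{1}\otimes p_{j_0})$ with $q_\gamma$ a projection and appeal to the standard inequality $\mu_t(axb)\le\|a\|_{\infty}\|b\|_{\infty}\mu_t(x)$ from \cite{FK}. Both are valid; yours is shorter.

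For (iv) the approaches genuinely diverge, and the obstacle you flag is exactly what the paper's argument is designed to avoid. The paper does \emph{not} try to show directly that $(\textbf{1}\otimes p_j)x(\textbf{1}\otimes p_{j_0})=c_j\otimes u_{jj_0}$ for arbitrary $x\in E(M\otimes B(l_2(I)))$. Instead it first takes $x$ in the algebraic tensor product $(M\otimes B(l_2(I)))\cap E(M\otimes B(l_2(I)))$, where Proposition~\ref{pr_2_3} applies literally in the second leg and the computation is purely algebraic, and then uses that this set is $\|\cdot\|_{E(M\otimes B(l_2(I)))}$-dense because $E$ has order continuous norm (a standing hypothesis in this section, cf.\ \cite{DDST}); the conclusion follows from (i) and the closedness of $\Phi(E(M,\tau,I))$ established in Corollary~\ref{cor_3_4}. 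Your direct route would require proving that the corner $(\textbf{1}\otimes p_j)S(M\otimes B(l_2(I)),\tau\otimes tr)(\textbf{1}\otimes p_{j_0})$ equals $\{c\otimes u_{jj_0}:c\in S(M,\tau)\}$; this is true, but making it precise (e.g.\ by identifying the reduced algebra $(\textbf{1}\otimes p_{j_0})(M\otimes B(l_2(I)))(\textbf{1}\otimes p_{j_0})\cong M$ and tracking the induced trace) is more work than the density argument and is not what the paper does.
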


\begin{proof}
(i) If $y=P_\gamma (x)$, then $$ y^ *  y = \left( {\sum\limits_{j
\in \gamma}^{} {\left( {\textbf{1} \otimes p_j } \right)} x\left(
{\textbf{1} \otimes p_{j_0 } } \right)} \right)^
*  \left( {\sum\limits_{j \in \gamma }^{} {\left( {\textbf{1} \otimes p_j }
\right)x\left( {\textbf{1} \otimes p_{j_0 } } \right)} } \right)
=$$

$$=\left( \sum\limits_{j \in \gamma }^{} \left( {\textbf{1 } \otimes p_{j_0 } }
\right)x^ *  \left( {\textbf{1} \otimes p_j } \right) \right)
\left( \sum\limits_{j \in \gamma }^{} \left( {\textbf{1} \otimes
p_j } \right)x\left( {\textbf{1} \otimes p_{j_0 } } \right)
\right)= $$
$$=\left( {\textbf{1} \otimes p_{j_0 } } \right)x^ *
\left( {\sum\limits_{j \in \gamma }^{} {\left( {\textbf{1} \otimes
p_j } \right)} } \right)x\left( {\textbf{1} \otimes p_{j_0 } }
\right).
$$

Since $\left\{ {\left( {\textbf{1} \otimes p_j } \right)}
\right\}_{j \in \gamma }^{}  $ is a family of pairwise orthogonal
projections in $M  \otimes  B\left( {l_2 \left( I \right)}
\right),$ we have $\sum\limits_{j \in \gamma }^{} {\left( {\textbf{1}
\otimes p_j } \right) \le \textbf{1}_{M \otimes B\left( {l_2
\left( I \right)} \right)} } .$

Consequently,  $$0 \le y^* y \le \left( {\textbf{1} \otimes p_{j_0
} } \right)x^
* \textbf{1}_{M \otimes B\left( {l_2 \left( I \right)} \right)}
x\left( {\textbf{1} \otimes p_{j_0 } } \right) = \left(
{\textbf{1} \otimes p_{i_0 } } \right)x^ *  x\left( {\textbf{1}
\otimes p_{i_0 } } \right)=$$ $$ = \left( {x\left( {\textbf{1}
\otimes p_{j_0 } } \right)} \right)^* \left( {x\left( {\textbf{1}
\otimes p_{j_0 } } \right)} \right),$$ i.e. $\left| y \right|^2
\le \left| {x\left( {\textbf{1} \otimes p_{j_0 } } \right)}
\right|^2 .$ By (\cite{BChS}) there exists $c \in
M \otimes B\left( {l_2 \left( I \right)} \right)$ such that
$\left\| c \right\|_{M \otimes B\left( {l_2 \left( I \right)}
\right)}  \le 1$ and $\left| y \right| = c^
*  \left| {x\left( {\textbf{1} \otimes p_{j_0 } } \right)}
\right|c.$ Hence,

$ \mu _t \left( {\left| y \right| } \right) =  \mu _t \left( {c^ *
\left| {x\left( {\textbf{1} \otimes p_{j_0 } } \right)} \right|c}
\right)  \le \left\| c \right\|_{M  \otimes B\left( {l_2 \left( I
\right)} \right)}^2 \mu _t \left( {\left| {x\left( {\textbf{1}
\otimes p_{j_0 } } \right)} \right| } \right) \le \mu _t \left(
{\left| {x\left( {\textbf{1} \otimes p_{j_0 } } \right)} \right| }
\right).$ Thus,

 $$\|P_\gamma (x)\|_{E \left( {M \otimes B\left(
{l_2 \left( I \right)} \right)} \right)}\leq\|{x\left( {\textbf{1}
\otimes p_{j_0 } } \right)}\|_{E \left( {M \otimes B\left( {l_2
\left( I \right)} \right)} \right)}\leq$$

$$\left\| {\textbf{1} \otimes p_{j_0 } } \right\|_{M
\otimes B\left( {l_2 \left( I \right)} \right)}  \left\| x
\right\|_{E \left( {M \otimes B\left( {l_2 \left( I \right)}
\right)} \right)}  \le \left\| x \right\|_{E \left( {M \otimes
B\left( {l_2 \left( I \right)} \right)} \right)}.$$

Statements  (ii),(iii) follow from definition.

(iv). If $x = \sum\limits_{i=1}^n  {a_i  \otimes b_i \in{\left( {M
\otimes B\left( {l_2 \left( I \right)} \right)} \right)}\cap E
\left( {M \otimes B\left( {l_2 \left( I \right)} \right)} \right)}
$, then

$$P_\gamma \left( x \right) = \sum\limits_{j \in \gamma }^{}
{\left( {\textbf{1} \otimes p_j } \right)\left( {\sum\limits_{i
=1}^n {a_i  \otimes b_i } } \right)\left( {\textbf{1} \otimes
p_{j_0 } } \right) = \sum\limits_{j \in \gamma }^{}
{\sum\limits_{i =1 }^n {\left( {a_i  \otimes \left( {p_j b_i
p_{j_0 } } \right)} \right).} } } $$
By Proposition \ref{pr_2_3} we have
that $$P_\gamma \left( x \right)= \sum\limits_{j \in\gamma }^{}
{\sum\limits_{i =1 }^n {(b_i e_{j_0},e_j)\left(  a_i \otimes u_{jj_0
} \right)} }\in \Phi \left( {E \left( {M,\tau,I} \right)}
\right).$$
 Since $E \left( {M \otimes B\left( {l_2 \left( I
\right)} \right)} \right)$ has order continuous norm \cite{DDST}
we have that
$${\left( {M \otimes B\left( {l_2 \left( I \right)} \right)}
\right)}\cap E \left( {M \otimes B\left( {l_2 \left( I \right)}
\right)} \right) $$    is dense in $ E \left( {M \otimes B\left(
{l_2 \left( I \right)} \right)} \right)$ \cite {DDST}.
Consequently for     $x\in E \left( {M \otimes B\left( {l_2 \left(
I \right)} \right)} \right)$ there exists a sequence
$\{x_n\}\subset {\left( {M \otimes B\left( {l_2 \left( I \right)}
\right)} \right)}\cap E \left( {M \otimes B\left( {l_2 \left( I
\right)} \right)} \right) $ such that
$$\|x_n-x\|_{E \left( {M \otimes B\left( {l_2 \left( I \right)}
\right)} \right)}\rightarrow 0.$$

By (i) we have that $\left\|P_\gamma \left( {x_n } \right) - P_\gamma
( x ) \right\|_{E \left( {M \otimes B\left( {l_2 \left( I \right)}
\right)} \right)} \to 0$.  Since $P_\gamma \left( {x_n } \right)
\in \Phi (E (M,\tau,I))$ and $ \Phi (E (M,\tau,I))$ is a closed
set, it follows that  $P_\gamma \left( x \right) \in \Phi (E (M,\tau,I)),$
i.e. $$P_\gamma \left( {E \left( {M \otimes B\left( {l_2 } \right)}
\right)} \right) \subset \Phi (E (M,\tau,I)).$$

\end{proof}

Now we  show that $\left\{ {P_\gamma  \left( x \right)}
\right\}_{\gamma  \in \Gamma} $ \ \  $\left\|  \cdot \right\|_{E
\left( {M \otimes B\left( {l_2 \left( I \right)} \right)}
\right)}$
-converges  for any \\
$x \in E \left( {M \otimes B\left( {l_2 \left( I \right)} \right)}
\right)$.
 If $a \in E(M,\tau),b\in E(B(l_2(I)))$, then by Proposition \ref{pr_2_3}, we have
$$P_\beta \left( {a \otimes b} \right) = \sum\limits_{j \in \beta }^{} {a \otimes p_{j}bp_{j_0 }
=  {\sum\limits_{j \in \beta }^{} {(be_{j_0},e_j)\left( a \otimes u_{jj_0 } \right)} }  =
\sum\limits_{j \in \beta }^{} {a_{j} \otimes u_{jj_0 }}},$$
where $a_j  = (be_{j_0},e_j)a$, $j \in
\beta $. Thus,
$$\left\| {P_\beta  \left( {a \otimes b} \right) - P_\gamma
\left( {a \otimes b} \right)} \right\|_{E \left( {M \otimes
B\left( {l_2 \left( I \right)} \right)} \right)}  =
\left\| {\left( {\sum\limits_{j \in \beta \backslash \gamma }^{}
{\left| {a_j } \right|^2 } } \right)^{1/2} }
\right\|_{E(M,\tau,I)} =$$
$$\| {\left( {\sum\limits_{j \in \beta
\backslash \gamma } {\left| (be_{j_0},e_j)\right| } }
\right)^{1/2} \left| a \right| }\|_{E(M,\tau,I)}=
\left( {\sum\limits_{j \in \beta \backslash \gamma } {\left|
(be_{j_0},e_j)\right| } } \right)^{1/2}\|a\|_{E(M,\tau,I)}$$
for all $\beta \ge \gamma  \ge \left\{ {j_0 } \right\}$.

Since $\left\| {be _{j_0 } } \right\|_{B(l_2(I))} =
\sum\limits_{j \in I} {\left| {\left( {be_{j_0 } ,e_j }
\right)} \right|^2 } $ we have that  $\{P_\gamma  \left( {a
\otimes b} \right)\}$ is a
 Cauchy net in $E(M \otimes B(l_2 ( I)))$ and it
 $\left\|\cdot \right\|_{E \left( {M \otimes B\left( {l_2 \left( I \right)}
\right)} \right)}  - $ converges in $\Phi \left( {E (M,\tau,I)}
\right).$ Its  $\| \cdot \|_{E \left( {M \otimes B\left( {l_2
\left( I \right)} \right)} \right)} $ -limit we denote by $P\left(
{a \otimes b} \right).$ Since $P_\gamma $ is a linear map we have that
for every $a_i \in E \left( M,\tau \right), b_i \in E \left( {M
\otimes B\left( {l_2 \left( I \right)} \right)} \right), i =
{1,...,n} $, the net $P_\gamma \left( {\sum\limits_{i = 1}^n {a_i
\otimes b_i } } \right)$ $\left\| \cdot  \right\|_{E \left( {M
\otimes B\left( {l_2 \left( I \right)} \right)} \right)} - $
converges to the element $\sum\limits_{i = 1}^n {P\left( {a_i \otimes
b_i } \right)} \in \Phi \left( {L_p (M,\tau,I)} \right) $. Since
$E \left( {M \otimes B\left( {l_2 \left( I \right)} \right)}
\right)$ has order continuous norm, it follows that the set  $E \left( {M )\otimes
E(B\left( {l_2 \left( I \right)} \right)} \right)$ is dense in $E
\left( {M \otimes B\left( {l_2 \left( I \right)} \right)}
\right).$  Using the inequality $\left\| {P_\gamma  } \right\| \le
1$ for all $\gamma \in \Gamma$, we obtain that $P_\gamma \left( x
\right)$ \ \ $\left\| \cdot \right\|_{E \left( {M \otimes B\left(
{l_2 \left( I \right)} \right)} \right)} - $ converges for every
$x \in E \left( {M \otimes B\left( {l_2 \left( I \right)} \right)}
\right)$ to some element which we denote  by $P_{} \left( x
\right)$. It is clear that $P$ is a linear map from $E\left(
{M \otimes B\left( {l_2 \left( I \right)} \right)} \right)$ into
$\Phi (E (M,\tau,I))$.

\begin{prop}
\label{pr_3_6}
\item (i) $\left\| P \right\| \le 1$; \item (ii)  $P^2  = P$;
\item (iii) $P\left( {E \left( {M  \otimes  B\left( {l_2 \left( I
\right)} \right)} \right)} \right) = \Phi (E (M,\tau,I))$.

\begin{proof}
 The statement (i) follows immediately from Proposition \ref{pr_3_5}(i).
\item (ii).  Since
$P_\beta \left( {P_\gamma  x} \right) = P_\gamma \left( x
\right)$ for $\beta  \ge \gamma $, we have that $P \left( {P_\gamma \left(
x \right)} \right) = P_\gamma \left( x \right)$ for every $\gamma
\in\Gamma.$ Then, the following convergence $\left\| P_\gamma(x)-P(x) \right\|_{E \left( {M
\otimes B\left( {l_2 \left( I \right)} \right)}
\right)}\rightarrow 0 $ implies that $P^2  = P$.  \item (iii). Let $x =
\sum\limits_{i \in I}^{} {a_i \otimes u_{ij_0 } } $ be an
arbitrary element in $\Phi (E(M,\tau,I))$ (the series
converges in $E \left( {M \otimes B\left( {l_2 \left( I \right)}
\right)} \right))$ and let $x_\gamma = \sum\limits_{i \in \gamma
}^{} {a_i \otimes u_{ij_0 } }. $
Since $\|x-x_\gamma\|_{E \left(
{M \otimes B\left( {l_2 \left( I \right)} \right)} \right)}\to 0$,
$P_\gamma \left( x \right) = x_\gamma $ and $\left\|
P_\gamma(x)-P(x) \right\|_{E \left( {M \otimes B\left( {l_2 \left(
I \right)} \right)} \right)}\rightarrow 0 $ we have $P\left( x
\right) = x.$ Hence, $$ \Phi (E (M,\tau,I))=
 P\left( \Phi (E (M,\tau,I) \right)) \subset P\left({E \left( {M
\otimes B\left( {l_2 \left( I \right)} \right)} \right)} \right)
\subset \Phi (E (M,\tau,I)).$$
\end{proof}
\end{prop}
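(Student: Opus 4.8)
The plan is to establish the three claims in order, using only the properties of the finite truncations $P_\gamma$ recorded in Proposition \ref{pr_3_5} together with the norm convergence $P_\gamma(x)\to P(x)$ that was verified in the paragraph preceding the statement. For (i), I would simply pass to the limit in the estimate $\|P_\gamma(x)\|\le\|x\|$ of Proposition \ref{pr_3_5}(i): since $\|P(x)\|_{E(M\otimes B(l_2(I)))}=\lim_\gamma\|P_\gamma(x)\|_{E(M\otimes B(l_2(I)))}$ and each term does not exceed $\|x\|_{E(M\otimes B(l_2(I)))}$, we get $\|P\|\le 1$. This, in particular, makes $P$ continuous, which is what the next step relies on.

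For (ii), I would first observe that $P$ fixes every truncation: by Proposition \ref{pr_3_5}(ii) we have $P_\beta P_\gamma=P_\gamma$ for $\beta\ge\gamma$, so $P(P_\gamma(x))=\lim_\beta P_\beta(P_\gamma(x))=P_\gamma(x)$ for each $\gamma$. Then, applying the continuity of $P$ from (i) to the convergent net $P_\gamma(x)\to P(x)$, we obtain $P(P(x))=\lim_\gamma P(P_\gamma(x))=\lim_\gamma P_\gamma(x)=P(x)$, i.e. $P^2=P$.

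For (iii), the inclusion $P(E(M\otimes B(l_2(I))))\subset\Phi(E(M,\tau,I))$ is immediate: $P_\gamma(x)\in\Phi(E(M,\tau,I))$ for all $\gamma$ by Proposition \ref{pr_3_5}(iv), and $\Phi(E(M,\tau,I))$ is a closed subspace by Corollary \ref{cor_3_4}, so its norm limit $P(x)$ belongs to it. For the opposite inclusion I would take a generic $x=\sum_{i\in I}a_i\otimes u_{ij_0}\in\Phi(E(M,\tau,I))$ (series norm-convergent) with truncations $x_\gamma=\sum_{i\in\gamma}a_i\otimes u_{ij_0}$; a short matrix-unit computation using Proposition \ref{pr_2_3} (which gives $p_j u_{ij_0}p_{j_0}=\delta_{ij}u_{jj_0}$) shows $P_\gamma(x)=x_\gamma$, and since $x_\gamma\to x$ while $P_\gamma(x)\to P(x)$, we conclude $P(x)=x$. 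Hence $\Phi(E(M,\tau,I))=P(\Phi(E(M,\tau,I)))\subset P(E(M\otimes B(l_2(I))))\subset\Phi(E(M,\tau,I))$, so equality holds.

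The step I would be most careful with is the interchange of limits in (ii): it is essential to have (i) — hence continuity of $P$ — established first, and to know that the net $\{P_\gamma(x)\}$ genuinely converges in norm, which is exactly the content of the paragraph preceding the proposition. Granting these, all three parts reduce to routine limiting arguments built on Proposition \ref{pr_3_5} and Corollary \ref{cor_3_4}.
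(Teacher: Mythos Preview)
Your proposal is correct and follows essentially the same route as the paper: each part is obtained by passing to the limit in the corresponding property of the truncations $P_\gamma$ from Proposition~\ref{pr_3_5}, using the norm convergence $P_\gamma(x)\to P(x)$ and the closedness of $\Phi(E(M,\tau,I))$ from Corollary~\ref{cor_3_4}. The only differences are expository---you spell out the use of continuity in (ii) and the matrix-unit identity $p_j u_{ij_0}p_{j_0}=\delta_{ij}u_{jj_0}$ in (iii) more explicitly than the paper does.
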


Now we are ready to prove of theorem \ref{t_1_1}.

\begin{proof}
(i). Let
 $$\widetilde E_i = E_i\left( {M_i \otimes B\left( {l_2
\left( I \right)} \right)} \right), \widetilde F_i=
F_i(M_i\otimes B(l_2(I))), \widetilde E  = E\left( {M_1
\otimes B\left( {l_2 \left( I \right)} \right)} \right),$$
$$\widetilde F= F(M_2\otimes B(l_2(I))),
\widehat{E_i}=E_i(M_i,\tau_i,I),
\widehat{F_i}=F_i(M_i,\tau_i,I),$$
$$\widehat{E}=E(M_1,\tau_1,I), \widehat{F}=F(M_2,\tau_2,I),i=1,2.$$
Denote by $\Phi_i$ the linear
isometrical map from $(L_1+L_\infty)(M_i,\tau_i,I)$ into
$(L_1+L_\infty)(M_i\otimes B(l_2(I)))= L_1(M_i\otimes B(l_2(I)))+
M\otimes B(l_2(I))$ (see Corollary \ref{cor_3_4}), $i=1,2.$ The construction
of $\Phi_i$ (see the proof of Propositions \ref{pr_3_1} and \ref{pr_3_3}) implies that
$\Phi_i$ is an isometrical map from  $\widehat{E_i}$ (respectively, $\widehat{F_i}, \widehat{E}$) into $\widetilde {E_i}$ (respectively, $\widetilde {F_i}, \widetilde {F}$).

Let $$P:(L_1+L_\infty)(M_1\otimes B(l_2(I)))\rightarrow
\Phi_1((L_1+L_\infty)(M_1,\tau_1,I)) $$ be the projection from Proposition \ref{pr_3_6}. Using constructions of
the isometry $\Phi_1$ and the projection $P$, we have that
$$P(\widetilde
{E_1})=\Phi_1(\widehat{E_1}), P(\widetilde {F_1})=\Phi_1(
\widehat{F_1}),$$
$$P(\widetilde {E_1}+ \widetilde
{F_1})=\Phi_1(\widehat{E_1}+ \widehat{F_1}), \ \
 P(\widetilde {E_1}\cap
\widetilde {F_1})=\Phi_1(\widehat{E_1} \cap \widehat{F_1}).$$

Let
$T:\widehat E_1+ \widehat F_1\rightarrow \widehat{E_2}+
\widehat{F_2}$ be a linear operator such that $T$ is bounded
operator from $\widehat{E_1}$ into $\widehat{E_2},$
$\widehat{F_1}$ into $\widehat{F_2}$ respectively. We define a
linear operator
$$\widetilde T:\widetilde E_1+ \widetilde
F_1\rightarrow \widetilde{E_2}+ \widetilde{F_2}$$
by
$$\widetilde
T(x+y)=(\Phi_2T\Phi_1^{-1}P)(x+y), \ \ x\in\widetilde E_1,
 y\in\widetilde F_1.$$
 It is clear that
 $$\widetilde T(\widetilde
{E_1})\subset\widetilde {E_2}, \ \   \widetilde T(\widetilde
{F_1})\subset\widetilde {F_2},$$
$$\|\widetilde T\|_{\widetilde {E_1}\rightarrow \widetilde
E_2}\leq\|\Phi_2\| \|T\|_{\widehat {E_1}\rightarrow \widehat
{E_2}}\|\Phi_1^{-1}\|\|P\|\leq\|T\|_{\widehat {E_1}\rightarrow
\widehat{E_2}}, $$
and similarly
$\|\widetilde T\|_{\widetilde F_1\rightarrow \widetilde F_2}\leq\|
T\|_{\widehat {F_1}\rightarrow \widehat{F_2}}$. By
Theorem \ref{t_2_5}(i) we obtain that  $\widetilde T(\widetilde
E)\subset\widetilde F,$ and
$$\|\widetilde T\|_{\widetilde E
\rightarrow \widetilde F}\leq c \max(\| T\|_{ \widehat
E_1\rightarrow \widehat E_2},\|T\|_{ \widehat F_1\rightarrow
\widehat F_2})$$ for some $c>0.$ Since $P^2=P,$ $P(\widetilde
E)=\Phi_1(\widehat E)$ (see Proposition \ref{pr_3_6} (ii),(iii)), it follows that for
every $z\in\widehat E$ we have $P(\Phi_1(z))=\Phi_1(z)$ and
$$\widehat{T}(\Phi_1(z))=(\Phi_2T\Phi_1^{-1}P)(\Phi_1(z))=\Phi_2(T(z))\in\widehat F.$$
This means that $T(z)=\Phi_2^{-1}(\widetilde
T(\Phi_1(z)))\in\widehat F.$ Hence $T(\widehat E)\subset\widehat
F$ and $$\| T\|_{\widehat E\rightarrow \widehat F}\leq\|\widetilde
T\|_{\widetilde E\rightarrow \widetilde F}\leq
c\|T\|_{B((\widehat{E_1},\widehat{F_1}),(\widehat{E_2},\widehat{F_2}))}.$$
Thus,  $(\widehat E,\widehat F)$ is a interpolation pair for the
pair $(((\widehat{E_1},\widehat{F_1}),(\widehat{E_2},\widehat{F_2}))).$
If $(E,F)$ is an exact interpolation pair for pair
$((E_1,F_1),(E_2,F_2))$ then $c=1.$ Hence
$(\widehat{E},\widehat{F})$ is an exact interpolation pair for
$(((\widehat{E_1},\widehat{F_1}),(\widehat{E_2},\widehat{F_2}))).$
The proof of statement \item(ii) is similar to that of (i).

\end{proof}

\end{document}